\DeclareMathOperator{\Tor}{Tor}
\DeclareMathOperator{\Ass}{Ass}
\def\N{\mathbb{N}} 
\def\m{\mathfrak{m}}
\def\gr{\text{gr}} \def\Ie{I_{\epsilon}}
\def\grm{\text{gr}_{\mathfrak m}}
\def\im{\text{im }}
\def\ker{\text{ker }}
\newtheorem{theorem}{Theorem}[section]
\newtheorem{corollary}[theorem]{Corollary}
\newtheorem{proposition}[theorem]{Proposition}
\newtheorem{definition}[theorem]{Definition}
\theoremstyle{remark}
\newtheorem{remark}[theorem]{\bf Remark}
\theoremstyle{definition}
\newtheorem{example}[theorem]{\bf Example}
\newtheorem{question}[theorem]{\bf Question}
\numberwithin{equation}{section}
\newcommand{\thistheoremname}{}
\newtheorem{genericthm}[theorem]{\thistheoremname}
  \newtheorem*{genericthm*}{\thistheoremname}
\newenvironment{namedthm*}[1]
  {\renewcommand{\thistheoremname}{#1}%
   \begin{genericthm*}}
  {\end{genericthm*}}  
\begin{document}

\title[Betti numbers under small perturbations]{Betti numbers under small perturbations}

\author[L. Duarte]{Lu\'is Duarte}
\address{Dipartimento di Matematica \\
Universit\`a degli Studi di Genova \\
Via Dodecaneso 35\\
I-16146 Genova, Italia}
\email{duarte@dima.unige.it}

\date{\today}
\keywords{Perturbation, associated graded ring, initial ideal, Hilbert function, Betti numbers, free resolution.}

\begin{abstract}
We study how Betti numbers of ideals in a local ring change under small perturbations. Given $p\in\N$ and given an ideal $I$ of a Noetherian local ring $(R,\m)$, our main result states that there exists $N>0$ such that if $J$ is an ideal with $I\equiv J\bmod \m^N$ and with the same Hilbert function as $I$, then the Betti numbers $\beta_i^R(R/I)$ and $\beta_i^R(R/J)$ coincide for $0\le i\le p$. 
Moreover, we present several cases in which an ideal $J$ such that $I \equiv J \bmod \m^N$ is forced to have the same Hilbert function as $I$, and therefore the same Betti numbers.
\end{abstract}

\subjclass[2020]{13A30, 13C05, 13D03, 13D40}

\maketitle

\section{Introduction}


The goal of this article is to study the behavior of ideals in a Noetherian local ring $(R,\m)$ under small perturbations. Given an ideal $I=(f_1,...,f_r) \subseteq R$ and an integer $N>0$, we will consider 
ideals $J$ of $R$ such that $I\equiv J\bmod \m^N$. This naturally includes 
the study of ideals obtained by perturbing generators of $I$, i.e, ideals of the form $\Ie=(f_1+\epsilon_1,\ldots,f_r+\epsilon_r)$ with $\epsilon_1,\ldots,\epsilon_r\in\m^N$. The latter is particularly related to the finite determinacy problem raised by Samuel. He asked whether, given power series $f_1,\ldots,f_r$ in a formal power series ring $R$, one can always find polynomial truncations $\widetilde{f}_1,\ldots,\widetilde{f}_r$ which define an isomorphic singularity. This question is also particularly relevant from the point of view of the use of computer algebra systems as, while it is possible to implement algorithmic computations with polynomials, in general with series that is not the case. 

In \cite{samuel56algebricite} Samuel showed that, if $f\in S=k\llbracket x_1,...,x_d \rrbracket$ is a hypersurface with an isolated singularity at $\m=(x_1,\ldots,x_d)$, 
and $\epsilon$ is in a sufficiently large power of $\m$, then $S/(f)$ and $S/ (f+\epsilon)$ are isomorphic as $k$-algebras. This result was extended by Hironaka \cite{hironaka1965equivalence}, who showed that given an ideal $I$ of $S$ such that $S/I$ is an equidimensional reduced isolated singularity, there exists $N> 0$ for which $S/I \cong S/J$ whenever $J$ is an ideal of the same height as $I$ such that $I \equiv J \bmod \m^N$, and $S/J$ is equidimensional and reduced. Both Samuel and Hironaka's results were further extended by Cutkosky and Srinivasan \cite{CutkoskySrinivasan}, who include cases of ideals which do not necessarily define isolated singularities, by taking into account the Jacobian ideal of $I$. 

 

Given an ideal $I$ and another ideal $J$, with $J \equiv I \bmod \m^N$, even when $R/I$ and $R/J$ are not isomorphic, one can still try to compare their singularities. Several efforts have been made in the literature to find conditions that ensure that $R/I$ and $R/J$ share some common features; for instance, the Hilbert function, among others. 
In this direction, Srinivas and Trivedi show in \cite{srinivas1996invariance} that, if $(R,\m,k)$ is a generalized Cohen-Macaulay local ring and $f_1,\ldots,f_r$ is part of a system of parameters of $R$, then the Hilbert function of $I=(f_1,\ldots,f_r)$ coincides with that of $\Ie=(f_1+\epsilon_1,\ldots,f_r+\epsilon_r)$ for any $\epsilon_1,\ldots,\epsilon_r\in \m^N$, with $N$ sufficiently large. This was greatly extended by Ma, Pham Hung Quy and Smirnov \cite{ma2019filter}, who proved that
the same conclusion holds true in any local ring $R$ if $f_1,\ldots,f_r$ is a filter-regular sequence.  Furthermore, in \cite{quy2021when}, an explicit upper bound for the smallest $N$ with the above property was obtained.

The main goal of this article is to study whether it is possible to preserve another important class of numerical invariants, the Betti numbers of the ideal $I$ as an $R$-module, under small perturbations. Our main result positively answers this question under the assumption that $I$ and its perturbation $J$ have the same Hilbert function (observe that in general two ideals with the same Hilbert function do not have the same Betti numbers, not even the same projective dimension).  We also provide examples that justify the need for this assumption (see Examples \ref{ExampleConjecture} and \ref{Example nCM}).

\begin{namedthm*}{Main result}
Let $I$ be an ideal of a Noetherian local ring $(R,\m)$ and let $p\in\N$. There exists $N\in\N$, depending only on $I$ and $p$, such that for every ideal $J$ satisfying $J \equiv I \bmod \m^N$, one has 
$\beta_i^R(R/I)=\beta_i^R(R/J)$ for all $0\le i\le p$,
provided $I$ and $J$ have the same Hilbert function.
\end{namedthm*}

Actually, we prove a stronger statement saying that the first $p$ maps in a minimal free resolution of $R/J$ can be obtained as a perturbation of the corresponding $p$ maps in a minimal free resolution of $R/I$ (see Theorem \ref{T-perturbatingcomplex}). Then, in Corollary \ref{T-SameBettiNumbers}, we deduce the above main result.
Furthermore, we provide a specific value for the natural number $N$ for which the statement holds. This value for $N$ will depend on the Artin-Rees numbers with respect to $\m$ of the first $p+1$ syzygy modules of $R/I$.
The main tool which is used in the proof comes from a result due to Eisenbud \cite{eisenbud1974adic}, which relates the homology of a given complex $C$ and that of a complex $C_\epsilon$ which is obtained by perturbing the maps of $C$ by maps $\epsilon$ with image in a sufficiently large power of $\m$ (see Theorem \ref{T-EisenbudComplexPerturbation}). 

A related result was obtained by Eisenbud and Huneke, in \cite{EisenbudHuneke}, where they prove that if $R$ is Cohen-Macaulay, and a given complex is obtained by perturbing the maps of a free resolution of $R/I$, then such a complex is itself a resolution. We point out a subtle but crucial difference with our main theorem: given any perturbation $J$ of $I$ with the same Hilbert function, we build a complex (in fact, a resolution) of $R/J$ which is a perturbation of the resolution of $R/I$. The result of Eisenbud and Huneke for Cohen-Macaulay rings, on the other hand, while it has no assumption on Hilbert functions, requires that a given perturbation of the minimal resolution of $R/I$ is already a complex, which then turns out to be a resolution.

The article is structured as follows: Section \ref{preliminaries} includes some preliminary results concerning the theory of standard bases as well as details on Eisenbud's result on perturbations of complexes. 
Section \ref{SectionBetti} contains the main results on the behavior of Betti numbers under small perturbations. 
Finally, Section \ref{SectionHilbert} presents several applications of the main results, together with meaningful examples. Moreover, we identify new cases in which a given ideal is forced to have the same Hilbert function as its perturbations, yielding cases in which our main theorem can be applied. Our analysis includes an extension of a result of Elias (see Theorem \ref{P-HighDepthPertutbation}). 
\newpage
\section{Preliminaries}\label{preliminaries}

We start by recalling notions, terminology and some preliminary results which will be used throughout this article.

$(R,\m)$ will denote a Noetherian local ring and $k=R/\m$ will denote its residue field. We will not require any assumptions on $k$. An element $f \in R$ is called \emph{filter-regular} if $\Ass_R((0:(f))) \subseteq \{\m\}$. A sequence of elements $f_1,\ldots,f_r$ of $R$ is called a \emph{filter-regular sequence} if, for every $1 \leq i \leq r$, the image of $f_i$ in $R/(f_1,\ldots,f_{i-1})$ is a filter-regular element. Given a finitely generated $R$-module $M$, we let $\beta_i^R(M) = \dim_k(\Tor_i^R(M,k))$ be its $i$-th Betti number. 

The \emph{Hilbert function} of $R$ is defined to be the Hilbert function of its associated graded ring $\gr_\m(R) = \bigoplus_{i=0}^\infty \m^i/\m^{i+1}$, which is a standard graded $k$-algebra. 


An \emph{$\m$-filtration} $\mathbb M$ of $M$ is a collection $\{M_i\}_{i\ge 0}$ of submodules of $M$ such that $\m M_i\subseteq M_{i+1}\subseteq M_{i}$ for every $i\ge 0$. 
The filtration $\mathbb M$ is called \emph{stable} (or good) if $\m M_n=M_{n+1}$ for all sufficiently large $n$. Given an $\m$-filtration $\mathbb M$, we define the 
\emph{associated graded module} of $M$ with respect to $\mathbb M$ as 
$$\gr_{\mathbb M}(M)=\bigoplus_{i=0}^{\infty}{\frac{M_i}{M_{i+1}}}.$$

Since $\mathbb M$ is an $\m$-filtration, $\gr_{\mathbb M}(M)$ has a natural structure of a graded module over $\grm(R)$.
In case $\mathbb M=\{\m^iM\}_{i\ge 0}$ is the $\m$-adic filtration of $M$, we will denote $\gr_{\mathbb M}(M)$ simply by $\grm(M)$. 



Given a nonzero element $x\in M$ we define $v(x)$ as the unique integer $i$ such that $x\in \m^iM$ and $x\not\in\m^{i+1}M$. We denote by $x^*\in\grm(M)$ the \emph{initial form} of $x$, that is, the image of $x$ inside $\m^{v(x)}M/\m^{v(x)+1}M$. 
If $N$ is a submodule of 
$M$, we define the \emph{initial module} of $N$, denoted by $N^*$, as the kernel of the graded map of $\grm(R)$-modules 
$$\grm(M)\to \grm(M/N)$$ 
induced by the projection $M\to M/N$. Thus, $\grm(M/N)\cong \grm(M)/N^*$.  Whenever we write $N^*$, the inclusion $N\subseteq M$ will be implicit from the context.

\begin{remark}
Observe that $N^*$ is the $\grm(R)$-submodule of $\grm(M)$ generated by the set $\{x^*\in\grm(M) \mid x\in N\}$. In fact, we have
$$N^*=\bigoplus_{i=0}^{\infty}{\frac{N\cap \m^iM+\m^{i+1}M}{\m^{i+1}M}} \cong \bigoplus_{i=0}^{\infty}{\frac{N\cap \m^iM}{N\cap \m^{i+1}M}}.$$
Therefore $N^*$ can also be viewed as the associated graded module of $N$ with respect to the filtration $\{N\cap \m^iM\}_{i\ge 0}$, which is stable by the Artin-Rees Lemma. 
\end{remark}

\begin{remark}
In the case of an ideal $I \subseteq R$, the ideal $I^*$ of $\gr_\m(R)$ is called the initial ideal of $I$. In what follows, we will refer to the Hilbert function of an ideal $I \subseteq R$, denoted by $\text{HF}_{R/I}$, to mean the Hilbert function of the local ring $R/I$, that is, the Hilbert function of $\gr_{\m/I}(R/I)$. Hence, $\text{HF}_{R/I}(n)=\text{HF}_{\grm(R)/I^*}(n)$ for all $n\ge 0$.
\end{remark}

A subset $\{x_1,\ldots,x_r\}$ of $N$ is said to be a \emph{standard basis} of $N$ if $\{x_1^*,\ldots,x_r^*\}\subseteq \grm(M)$ generates the $\grm(R)$-module $N^*$. While not all generating sets of $N$ are a standard basis, it is true that every standard basis of $N$ is a generating set (see \cite[Proposition 2.1]{herzog2016homology}). A standard basis of $N$ is said to be \emph{minimal} if no strict subset of it is a standard basis of $N$. For more details on these concepts we refer the reader to \cite{rossi2010consecutive}, \cite{rossi2009minimal} and \cite{shibuta2008cohen}. It will be important for us to consider minimal standard bases since they are related to Artin-Rees numbers, whose definition we recall next.

\begin{definition}
Let $R$ be a Noetherian ring, $I$ an ideal of $R$, and $N\subseteq M$ be finitely generated $R$-modules. The Artin-Rees number $\emph{\text{AR}}(I,N\subseteq M)$ is the least integer $s$ such that
$$I^nM\cap N=I^{n-s}(I^sM\cap N)\quad \text{for every } n\ge s.$$
\end{definition}

By \cite[Proposition 2.1 (a)]{herzog2016homology}, if $\{x_1,\ldots,x_r\}$ is a minimal standard basis of $N$, then $\text{AR}(\m,N \subseteq M) = \max\{v(x_i),\ldots, v(x_r)\}$. In particular, if $x_1,\ldots,x_r$ are elements of $N$ such that $x_1^*,\ldots,x_r^*$ generate $N^*$ as a $\gr_\m(R)$-module up to degree $\text{AR}(\m,N \subseteq M)$, then they must generate $N^*$.


\subsection{Approximations of complexes:} We conclude this preliminary section by discussing a result 
on how the homology of a complex changes under perturbations. Let
$$\begin{tikzcd}
C: \cdots \arrow[r] & C_{n+1} \arrow[r, "f_{n+1}"] & C_{n} \arrow[r, "f_n"] & C_{n-1} \arrow[r] & \cdots \end{tikzcd}$$ 
be a complex of finitely generated $R$-modules.
An $\m$\emph{-adic approximation of} $C $ of order $d=(\ldots,d_{n+1},d_n,d_{n-1},\ldots)\in\mathbb N^{\mathbb Z}$ is a complex $C _{\epsilon}$ of the form $$\begin{tikzcd}[row sep=large,column sep = large]
C _{\epsilon}: \cdots \arrow[r] & C_{n+1} \arrow[r, "f_{n+1}+\epsilon_{n+1}"] & C_{n} \arrow[r, "f_n+\epsilon_n"] & C_{n-1} \arrow[r] & \cdots\end{tikzcd}$$
where $\epsilon_n$ is a map from $C_n$ to $\m^{d_n}C_{n-1}$ for all $n$.

For any integer $n$, let $H_n(C)$ denote the $n$-th homology module of $C$. We will consider the initial module $H_n(C )^* = \bigoplus_p H_n(C)^*_p$ of $H_n(C)$ with respect to the inclusion $H_n(C )\subseteq C_n/\text{im }(f_{n+1})$, where the latter has a natural $\m$-adic filtration induced by $\{\m^i C_n\}_{i \ge 0}$, and we want to compare such an initial module with the one obtained from a perturbed complex $C_\epsilon$. We warn the reader that the initial module of $H_n(C)$ is computed inside $\gr_\m(C_n/{\rm im}(f_{n+1}))$, while the initial module of $H_n(C_\epsilon)$ is computed inside $\gr_\m(C_n/{\rm im}(f_{n+1}+\epsilon_{n+1}))$. 
Despite this difference, Eisenbud's main result in  \cite{eisenbud1974adic} shows a close relation between them.
The following technical result highlights information that can essentially be deduced from Eisenbud's proof.

\begin{theorem}\label{T-EisenbudComplexPerturbation}
Let $C $ be a complex of finitely generated $R$-modules. There exists a sequence of integers $d=(\ldots,d_{n+1},d_n,d_{n-1},\ldots)$ such that, if $C _{\epsilon}$ is an $\m$-adic approximation of $C$ of order $d$, then \begin{enumerate}[(i)]
\item $H_n(C_\epsilon )^*$ is a subquotient of $H_n(C )^*$ for all $n$.
\item If $H_n(C )$ and $H_{n-1}(C )$ are both annihilated by some power of $\m$, then $H_n(C )^*\cong H_n(C_\epsilon )^*$. 
\end{enumerate}
Moreover, letting $s_n=\emph{\text{AR}}(\m,\emph{\text{im}}(f_{n+1})\subseteq C_n)$, for (i) to hold it suffices to take $$d_n=1+\max\{s_n,s_{n-1},s_{n-2}\}$$ for every $n$. For (ii) to hold it suffices to take $$d_{n+1}=d_n=d_{n-1}=1+\max\{s_n,s_{n-1},s_{n-2},q\},$$ where $q$ is such that $\m^q$ annihilates $H_n(C )$ and $H_{n-1}(C )$.
\end{theorem}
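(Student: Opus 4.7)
The plan is to adapt Eisenbud's successive-approximation argument and read off the explicit Artin-Rees bounds controlling it.

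The first step is to compare the boundary submodules. Setting $B_n=\text{im}(f_{n+1})$ and $B_n^\epsilon=\text{im}(f_{n+1}+\epsilon_{n+1})$, every element $f_{n+1}(y)\in B_n$ differs from $(f_{n+1}+\epsilon_{n+1})(y)\in B_n^\epsilon$ by $\epsilon_{n+1}(y)\in \m^{d_{n+1}}C_n$, and symmetrically. Because a minimal standard basis of $B_n$ consists of elements of valuation at most $s_n$, the bound $d_{n+1}\geq s_n+1$ would align the initial forms of generators and yield the identification $B_n^*=(B_n^\epsilon)^*$ inside $\gr_\m(C_n)$, placing both $H_n(C)^*$ and $H_n(C_\epsilon)^*$ inside the same graded ambient module $\gr_\m(C_n)/B_n^*$.

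Next, for (i), I would compare cycles. Let $z\in C_n$ be a cycle of $C_\epsilon$ with $v(z)=p$; then $f_n(z)=-\epsilon_n(z)\in \m^{d_n+p}C_{n-1}\cap \text{im}(f_n)$. Applying Artin-Rees with $s_{n-1}$ to the inclusion $\text{im}(f_n)\subseteq C_{n-1}$ would produce $u\in \m^{d_n+p-s_{n-1}}C_n\subseteq \m^{p+1}C_n$ with $f_n(u)=f_n(z)$, provided $d_n\geq s_{n-1}+1$. Then $z'=z-u$ is a genuine cycle of $C$ whose initial form in $\gr_\m(C_n)_p$ equals that of $z$. Combined with the first step, the assignment $[z^*]\mapsto[z'^*]$ would descend to a well-defined map realizing $H_n(C_\epsilon)^*$ as a subquotient of $H_n(C)^*$, proving (i).

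For (ii), I would run the argument in reverse to construct a lift. Given a cycle $z$ of $C$ with $v(z)=p$, I would seek $w\in \m^{p+1}C_n$ solving $(f_n+\epsilon_n)(w)=-\epsilon_n(z)$. The obstruction lies in $H_{n-1}(C_\epsilon)$, which by (i) inherits the $\m^q$-annihilation from $H_{n-1}(C)$; the enlarged bound $d_n\geq s_{n-1}+q+1$, absorbed into the hypothesis, would then force $\epsilon_n(z)$ into the image of $f_n+\epsilon_n$, and iterating the approximation would yield an inverse that upgrades the subquotient of (i) to an isomorphism. The main obstacle will be the bookkeeping: verifying independence of the chosen preimage $u$ uniformly in $p$, and ensuring that the two candidate ambient graded modules truly coincide under the stated bounds, which is what forces $d_n$ to involve the triple $\{s_n, s_{n-1}, s_{n-2}\}$ rather than $s_{n-1}$ alone.
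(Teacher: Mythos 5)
Your first step contains a genuine error that the rest of the argument is built on. From the Artin--Rees/standard-basis argument you only get the inclusion $B_n^*\subseteq (B_n^\epsilon)^*$ (this is the paper's Proposition \ref{P-starcontained}); the reverse inclusion would require a bound on $\text{AR}(\m,B_n^\epsilon\subseteq C_n)$, which you do not control a priori, and in fact the claimed equality $B_n^*=(B_n^\epsilon)^*$ is false in general. Take $R=k\llbracket x,y,z\rrbracket$, $C:0\to R^3\xrightarrow{\,[x^2\ xy\ y]\,}R\to 0$ and $\epsilon_1=[0\ 0\ -z^N]$ with $N$ arbitrarily large: then $xy-x(y-z^N)=xz^N$ lies in the perturbed image, so $xz^N\in(\im(f_1+\epsilon_1))^*\smallsetminus(x^2,y)=(\im(f_1+\epsilon_1))^*\smallsetminus(\im f_1)^*$ (this is the paper's Example \ref{Example nCM} recast as a map of free modules). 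Consequently $H_n(C)^*$ and $H_n(C_\epsilon)^*$ live in genuinely different ambient graded modules $\gr_\m(C_n)/B_n^*$ and $\gr_\m(C_n)/(B_n^\epsilon)^*$ --- the paper explicitly warns about this --- and your map $[z^*]\mapsto[z'^*]$ is not well defined: since $(B_n^\epsilon)^*$ may be strictly larger than $B_n^*$, a class can vanish in $H_n(C_\epsilon)^*$ while your candidate image in $H_n(C)^*$ does not, and this is exactly why the theorem asserts only a \emph{subquotient} relation rather than an injection.

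Your cycle-correction in the second step (replacing an $\epsilon$-cycle $z$ by an honest cycle $z'$ with $z-z'\in\m^{p+1}C_n$, using $d_n>s_{n-1}$) is sound and is essentially the stabilization statement $^{r}Z_n^p+\m^{p+1}C_n={}^{r(n)}Z_n^p+\m^{p+1}C_n$ proved in the paper. What is missing is the matching treatment of boundaries: the paper's (i.e.\ Eisenbud's) proof works with the truncated boundary modules $^{r}B_n^p=\m^pC_n\cap f_{n+1}(\m^{p-r}C_{n+1})$, shows that both spectral sequences agree at the finite page $r(n)=1+\max\{s_n,s_{n-1}\}$ (whence $d_n=\max\{r(n),r(n-1)\}$, which is where $s_{n-2}$ enters --- the perturbation of $f_n$ must be small enough to preserve both the $n$-th and the $(n-1)$-st pages), and only then passes to the $\infty$-page of the perturbed sequence, which is a subquotient of its $r(n)$-page. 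To repair your argument you would need to replace the false identification of $B_n^*$ with $(B_n^\epsilon)^*$ by a comparison at a finite truncation level of the boundary filtration; at that point you would be rewriting the spectral-sequence proof in element-wise language. The sketch of (ii) has the right shape but inherits the same problem.
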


\begin{remark}
From Eisenbud's proof of Theorem \ref{T-EisenbudComplexPerturbation} one can also deduce that, for every integer $n$, the $k$-vector space $H_n(C_\epsilon )^*_p$ is a subquotient of $H_n(C )^*_p$ for every integer $p$. In other words, $H_n(C_\epsilon)^* = \bigoplus_p H_n(C_\epsilon)^*_p$ is a \emph{graded} subquotient of $H_n(C)^* = \bigoplus_p H_n(C)^*_p$. For completeness, in order to justify both this and the stated bound for the $d_n$, we include a sketch of Eisenbud's proof.
\end{remark} 
\begin{proof}
The proof is done by comparing the spectral sequences associated to both complexes $C$ and $C_{\epsilon}$. 
By setting 
$$
^{r}Z_n^p=\{x\in\m^pC_n:f_n(x)\in\m^{p+r}C_{n-1}\}$$$$
^{r}B_n^p=\m^pC_n\cap f_{n+1}(\m^{p-r}C_{n+1})
$$ for every $r\ge0$ and every $n,p\in\mathbb Z$, 
the usual spectral sequence $\{^{r}E_n^p\}_{n,p\in\mathbb Z}$ associated to the complex $C$ is given by 
$$^{r}E_n^p=\frac{^{r}Z_n^p+\m^{p+1}C_n}{^{r-1}B_n^p+\m^{p+1}C_n}\cong 
\frac{^{r}Z_n^p}{^{r-1}B_n^p+{^{r-1}Z_n^{p+1}}}.$$
This spectral sequence converges to the homology of $C$.

\vspace{0.2cm}
\emph{Claim:} We claim that for fixed $n$ we have $^{r}E_n^p={^{r(n)}E_n^p}$ for every $p$ and every $r\ge r(n)$, where $r(n)=1+\max\{s_n,s_{n-1}\}$. 
\begin{proof}
 We start by observing that $^{r-1}B_n^p={^{r(n)-1}B_n^p}$ for every $r\ge r(n)$. Indeed, the inclusion $\supseteq$ holds since $\m^{p-r+1}C_n\supseteq \m^{p-r(n)+1}C_n$, while the inclusion $\subseteq$ holds since $^{r-1}B_n^p \subseteq \m^p C_n$ and also
\begin{equation*}
\begin{split}
^{r-1}B_n^p & = \m^pC_n\cap f_{n+1}(\m^{p-r+1}C_{n+1}) \subseteq \m^pC_n\cap \im(f_{n+1})
\\ & \subseteq \m^{p-s_n}\im(f_{n+1})
\subseteq f_{n+1}(\m^{p-r(n)+1}C_{n+1}).
\end{split}
\end{equation*}
Now we prove that $^{r}Z_n^p+\m^{p+1}C_n={^{r(n)}Z_n^p+\m^{p+1}C_n}$ for every $r\ge r(n)$. The inclusion $\subseteq$ is trivial since $\m^{p+r}C_{n-1}\subseteq \m^{p+r(n)}C_{n-1}$. To show $\supseteq$ we observe that, given $x\in {^{r(n)}Z_n^p}$,
$$f_n(x)\in \m^{p+r(n)}C_{n-1}\cap \im(f_n)\subseteq \m^{p+r(n)-s_{n-1}}\im(f_n)\subseteq \m^{p+1}\im(f_n),$$ thus showing that $x\in \ker(f_n)+\m^{p+1}C_n$. It follows that $x\in \m^pC_n\cap(\ker(f_n)+\m^{p+1}C_n)=(\m^pC_n\cap\ker(f_n))+\m^{p+1}C_n\subseteq {^{r}Z_n^p+\m^{p+1}C_n}$.
\end{proof}

Consequently, $H_n(C)_p^*={^{r(n)}E_n^p}$ for all $n,p$.
Let $\{^{r}F_n^p\}_{n,p\in\mathbb Z}$ be the spectral sequence associated to $C_{\epsilon}$. In \cite{eisenbud1974adic}, Eisenbud shows that by choosing $d_n=\max\{r(n),r(n-1)\}=1+\max\{s_n,s_{n-1},s_{n-2}\}$ for all $n$, we have $^{r(n)}E_n^p={^{r(n)}F_n^p}$ for all $n,p$. Since $^{\infty}F_n^p$ is a subquotient of $^{r}F_n^p$ for every $r$, $n$ and $p$, it follows that 
$H_n(C_\epsilon)^*=\oplus_{p}{^{\infty}F_n^p}$ is a graded subquotient of $\oplus_{p}{^{r(n)}F_n^p}\cong\oplus_{p}{^{r(n)}E_n^p}\cong H_n(C )^*$.

The bound for $d_{n+1},d_n$ and $d_{n-1}$ on (ii) also follows from Eisenbud's proof and our expression for $r(n)$.
\end{proof}

An immediate corollary of Theorem \ref{T-EisenbudComplexPerturbation}, which will be crucial in the proof of our main theorem, is the following.

\begin{corollary}\label{CorollaryEisenbud}
Let $f:M\to N$ be an $R$-homomorphism of finitely generated modules over the local ring $(R,\m)$ and let $d=1+\emph{\text{AR}}(\m,\emph{\im}(f)\subseteq N)$. Then for any map $\epsilon:M\to \m^dN$ we have that $(\ker(f+\epsilon))^*$ is a graded subquotient of $(\ker (f))^*$. Here, the initial modules of $\ker(f)$ and $\ker(f+\epsilon)$ are both computed inside $\emph{\text{gr}}_{\m}(M)$.
\end{corollary}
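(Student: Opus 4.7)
The plan is to reduce the statement to a direct application of Theorem \ref{T-EisenbudComplexPerturbation}(i). First, I would view $f\colon M\to N$ as the two-term complex $C$ concentrated in homological degrees $0$ and $1$, with $C_1=M$, $C_0=N$, $f_1=f$, and $C_n=0$ otherwise. The perturbation $f+\epsilon$ then defines the approximation $C_\epsilon$ of $C$ with $\epsilon_1=\epsilon$ and all other $\epsilon_n=0$. Under this identification, $H_1(C)=\ker(f)$ and $H_1(C_\epsilon)=\ker(f+\epsilon)$, and since $\im(f_2)=0$, both initial modules are computed inside $\grm(C_1/\im(f_2))=\grm(M)$, matching the ambient space specified in the corollary.

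Next, I would check that the value of $d$ in the hypothesis is exactly the bound required by the theorem for (i) to apply to this complex. Setting $s_n=\text{AR}(\m,\im(f_{n+1})\subseteq C_n)$, we have $s_n=0$ for all $n\neq 0$ because the adjacent maps are zero and so $\im(f_{n+1})=0$, while $s_0=\text{AR}(\m,\im(f)\subseteq N)$. The condition $d_1\ge 1+\max\{s_1,s_0,s_{-1}\}$ in Theorem \ref{T-EisenbudComplexPerturbation}(i) therefore reduces to $d_1\ge 1+s_0=d$, which is exactly what the assumption $\epsilon\colon M\to\m^d N$ provides. The bounds on the other $d_n$ are trivially satisfied by $\epsilon_n=0$.

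With these verifications in place, Theorem \ref{T-EisenbudComplexPerturbation}(i) yields that $H_1(C_\epsilon)^*=(\ker(f+\epsilon))^*$ is a graded subquotient of $H_1(C)^*=(\ker(f))^*$ inside $\grm(M)$, which is precisely the desired conclusion. I do not foresee any real obstacle, as this is essentially a bookkeeping reduction to the preparatory theorem. The only point that requires a tiny bit of care is verifying that the two initial modules live inside the same ambient graded module, which holds because $f_2=0$ forces $C_1/\im(f_2)=C_1/\im(f_2+\epsilon_2)=M$.
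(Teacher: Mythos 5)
Your proof is correct and is exactly the paper's argument: the paper's entire proof is ``Apply Theorem \ref{T-EisenbudComplexPerturbation} to the complex $0 \to M \xrightarrow{f} N \to 0$,'' and your write-up just makes explicit the indexing and the verification that $d_1 = 1 + s_0 = d$, which is the intended (and correct) bookkeeping.
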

\begin{proof}
Apply Theorem \ref{T-EisenbudComplexPerturbation} to the complex
$\begin{tikzcd}
0 \arrow[r] & M \arrow[r, "f"] & N \arrow[r] & 0
\end{tikzcd}.$ 
\end{proof}

\section{Betti numbers under small perturbations}\label{SectionBetti}

We begin this section with an observation regarding the inital module of a of a pertubed submodule.

\begin{proposition}\label{P-starcontained}
Let $(R,\m)$ be a Noetherian local ring, $M$ a finitely generated $R$-module and $L$ a submodule of $M$. For $N>\emph{\text{AR}}(\m,L\subseteq M)$, if $K$ is a submodule of $M$ such that $L\equiv K \bmod \m^NM$, then $L^*\subseteq K^*$.
\end{proposition}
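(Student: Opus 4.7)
The plan is to exploit the existence of a minimal standard basis of $L$ together with the characterization of $\text{AR}(\m,L\subseteq M)$ recalled in the preliminaries. Set $s = \text{AR}(\m, L\subseteq M)$, so that $N>s$ by hypothesis. By \cite[Proposition 2.1]{herzog2016homology}, $L$ admits a minimal standard basis $x_1,\ldots,x_r$ with $v(x_i)\le s$ for every $i$; in particular $x_1^*,\ldots,x_r^*$ generate $L^*$ as a $\gr_\m(R)$-module.

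It then suffices to show that each initial form $x_i^*$ lies in $K^*$. First I would use the hypothesis $L\equiv K\bmod \m^N M$ to pick, for each $i$, an element $y_i\in K$ with $x_i-y_i\in\m^N M$. Since $N>s\ge v(x_i)$, we have
\[
x_i - y_i \in \m^N M \subseteq \m^{v(x_i)+1}M,
\]
so $y_i\in\m^{v(x_i)}M\setminus\m^{v(x_i)+1}M$ and $y_i^*=x_i^*$ in $\gr_\m(M)$. Because $y_i\in K$, its initial form $y_i^*$ belongs to $K^*$ by definition, and hence so does $x_i^*$. Since the $x_i^*$ generate $L^*$, this gives $L^*\subseteq K^*$.

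The argument is essentially bookkeeping with valuations, and the only non-trivial input is the standard-basis/Artin--Rees dictionary recalled in the preliminaries. The single delicate point is to ensure the strict inequality $N>s$ is actually used correctly: it guarantees that the perturbation $x_i-y_i$ sits in a high enough power of $\m$ to leave every initial form $x_i^*$ unchanged, which is exactly what forces $x_i^*$ to be detected as an initial form of an element of $K$. No hypothesis on $K$ (such as having the same Hilbert function as $L$, or even containing $L$) is needed for the inclusion; of course, equality $L^*=K^*$ would require additional input and is not claimed here.
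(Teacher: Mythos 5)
Your proof is correct and is essentially identical to the paper's: both take a minimal standard basis of $L$, use the Artin--Rees characterization $\text{AR}(\m,L\subseteq M)=\max_i v(x_i)$ to see that a perturbation by $\m^N M$ leaves each initial form $x_i^*$ unchanged, and conclude $L^*=(x_1^*,\ldots,x_r^*)\subseteq K^*$.
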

\begin{proof}
Consider a minimal standard basis $\{x_1,\ldots,x_m\}$ of $L$. As $L\equiv K \bmod \m^NM$, there are $\epsilon_1,\ldots,\epsilon_m\in\m^NM$ such that $x_i+\epsilon_i\in K$ for $i=1,\ldots,m.$ By \cite[Proposition 2.1]{herzog2016homology}, $\text{AR}(\m,L\subseteq M)=\max\{v(x_1),\ldots,v(x_m)\}$, hence the choice $N>\text{AR}(\m,L\subseteq M)$ implies that $(x_i+\epsilon_i)^*=x_i^*$ for every $i=1,\ldots,m$. We conclude that \par
\vspace{\abovedisplayshortskip}
\hfill
$L^*=(x_1^*,\ldots,x_m^*)=((x_1+\epsilon_1)^*,\ldots,(x_m+\epsilon_m)^*)\subseteq K^*.$ \end{proof}

This has the following implication.

\begin{proposition}\label{Cor-EqualStar}
Let $(R,\m)$ be a Noetherian local ring and $I$ be an ideal of $R$. For $N>\emph{\text{AR}}(\m,I\subseteq R)$, if $J$ is an ideal of $R$ such that $I\equiv J \bmod \m^N$, then 
\begin{enumerate}[(i)]
\item $\emph{\text{HF}}_{R/I}(n)\ge \emph{\text{HF}}_{R/J}(n)$ for every $n\in\N$. In particular $\dim(R/I)\ge \dim(R/J)$.
\item $\emph{\text{HF}}_{R/I}(n)= \emph{\text{HF}}_{R/J}(n)$ for every $n\in\N$ if and only if $\emph{\text{\gr}}_{\m}(R/I)\cong\emph{\text{\gr}}_{\m}(R/J)$ if and only if $I^*=J^*$.
\end{enumerate}
\end{proposition}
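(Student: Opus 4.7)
The plan is to note that Proposition~\ref{P-starcontained} applied with $M=R$ and $L=I$ is already the essential ingredient: the hypothesis $N>\text{AR}(\m,I\subseteq R)$ and $I\equiv J\bmod \m^N$ immediately yield the containment $I^*\subseteq J^*$ of homogeneous ideals in $\gr_\m(R)$. This containment induces a surjection of standard graded $k$-algebras
\[
\varphi\colon \gr_\m(R)/I^*\twoheadrightarrow \gr_\m(R)/J^*,
\]
and the whole proposition can be read off from $\varphi$ together with the identifications $\gr_\m(R/I)\cong \gr_\m(R)/I^*$ and $\gr_\m(R/J)\cong \gr_\m(R)/J^*$ recalled in the preliminaries.

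For part (i), I would take the degree-$n$ piece of $\varphi$, which is a surjection of finite-dimensional $k$-vector spaces of dimensions $\text{HF}_{R/I}(n)$ and $\text{HF}_{R/J}(n)$ respectively; this gives the asserted inequality of Hilbert functions. The inequality of dimensions then follows from the standard fact that $\dim R/I=\dim \gr_\m(R/I)$ and $\dim R/J=\dim \gr_\m(R/J)$, together with the observation that dimension cannot increase under the surjection $\varphi$.

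For part (ii), I would prove the chain of implications. The implication $I^*=J^*\Rightarrow \gr_\m(R/I)\cong\gr_\m(R/J)$ is immediate from the preliminaries, and $\gr_\m(R/I)\cong\gr_\m(R/J)\Rightarrow \text{HF}_{R/I}=\text{HF}_{R/J}$ is definitional. The only non-formal step is the final implication: assuming $\text{HF}_{R/I}(n)=\text{HF}_{R/J}(n)$ for all $n$, the degree-$n$ piece of $\varphi$ is a surjection between $k$-vector spaces of equal finite dimension, hence an isomorphism in every degree; this forces $\varphi$ to be an isomorphism of graded rings, and therefore $I^*=J^*$.

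There is no real obstacle here: once Proposition~\ref{P-starcontained} is in hand, the argument is a direct bookkeeping with the surjection $\varphi$ and the fact that a surjective linear map between equidimensional finite-dimensional vector spaces is an isomorphism. The only point to keep in mind is that the hypothesis $N>\text{AR}(\m,I\subseteq R)$ is only imposed on $I$, so the containment runs in one direction ($I^*\subseteq J^*$, not the reverse); nevertheless equality of Hilbert functions is strong enough to upgrade this one-sided containment to equality, which is why the formulation of (ii) is symmetric in $I$ and $J$ even though the hypothesis is not.
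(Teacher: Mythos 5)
Your proposal is correct and follows essentially the same route as the paper: both deduce $I^*\subseteq J^*$ from Proposition~\ref{P-starcontained}, read the Hilbert function inequality off the induced surjection $\gr_\m(R)/I^*\twoheadrightarrow\gr_\m(R)/J^*$, and upgrade to equality degreewise when the Hilbert functions agree. Your write-up merely spells out the bookkeeping (the degreewise isomorphism argument and the dimension statement) that the paper leaves implicit.
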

\begin{proof}
We have $\grm(R/I)=\grm(R)/I^*$ and $\grm(R/J)=\grm(R)/J^*$. Since $I^*\subseteq J^*$ by Proposition \ref{P-starcontained}, it follows that $\text{HF}_{R/I}(n)\ge \text{HF}_{R/J}(n)$ for all $n\ge 0$, with equality if and only if $I^*=J^*$. Also, $I^*=J^*$ implies $\grm(R/I)\cong \grm(R/J)$, which in turn implies $\text{HF}_{R/I}= \text{HF}_{R/J}$.
\end{proof}
We remark that point (i) in Proposition \ref{Cor-EqualStar} has been proved by Srinivas and Trivedi in \cite[Lemma 3]{srinivas1996invariance}, although our proof is different. 

If $R$ is Cohen-Macaulay, or even just equidimensional, then (i) says that the height cannot decrease after perturbation. Still, it is not true that in general the height is preserved under any sufficiently small perturbations. The following example, privately communicated to us by Pham Hung Quy and Rossi, shows that there are ideals for which the height is not preserved under truncations on the generators of an ideal.   

\begin{example}
Consider $R=k\llbracket x,y \rrbracket$ and $I=(f_1,f_2)$, where
$$\begin{array}{c}
f_1=(x+y+y^2)(xy+y^2+y^3+\cdots) \\ 
f_2=(x+y+y^2)(xy+x^2+x^3+\cdots).
\end{array}$$ 
$I$ has height 1 because $I\subseteq (x+y+y^2)$. For $N,N' \geq 3$, we consider the truncations of $f_1$ at degree $N$ and of $f_2$ at degree $N'$: 
$$\begin{array}{c}
(f_1)_N=(x+y+y^2)(xy+y^2+y^3+\cdots+y^{N-2})+y^N+xy^{N-1} \\ 
(f_2)_{N'}=(x+y+y^2)(xy+x^2+x^3+\cdots+x^{N'-2})+x^{N'}+x^{N'-1}y.
\end{array}$$ 
It can be checked that $I_{N,N'}=((f_1)_N,(f_2)_{N'})$ has height $2$, and therefore $\dim(R/I) = 1 > \dim(R/I_N) = 0$. Thus, the inequality in Proposition \ref{Cor-EqualStar} (i) can be strict. This example is particularly interesting in the direction of the finite determinacy problem, since it shows that no polynomial truncations of $f_1$ and $f_2$ generate an ideal of the same height as $I$. 
\end{example}

We proceed towards showing some first results concerning the the behavior of Betti numbers under small perturbations. The following proposition shows that there is an inequality for the zero-th Betti number of an ideal, i.e., the minimal number of generators, which we denote by $\mu(-)$.

\begin{proposition}\label{P-NumberGeneratorsInequality}
Let $(R,\m)$ be a Noetherian local ring and let $I$ be an ideal of $R$. For $N>\emph{\text{AR}}(\m,I\subseteq R)$, if $J$ is an ideal of $R$ such that $I\equiv J \bmod \m^N$, then $\mu(I)\le\mu(J)$.
\end{proposition}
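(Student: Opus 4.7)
The plan is to compute $\mu(I)$ and $\mu(J)$ directly via the identification $\mu(-)=\dim_k(-/\m\cdot -)$: I will start from a minimal generating set $f_1,\ldots,f_r$ of $I$ (so $r=\mu(I)$), produce corresponding elements $g_i\in J$ using the hypothesis $I\equiv J\bmod\m^N$, and argue that the $g_i$ remain $k$-linearly independent in $J/\m J$. This immediately yields $\mu(J)\ge r=\mu(I)$.

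For the construction, the congruence $I\subseteq J+\m^N$ produces $\epsilon_i\in\m^N$ with $g_i:=f_i+\epsilon_i\in J$. To prove linear independence, I would suppose a relation $\sum a_i g_i\in\m J$ with $a_i\in R$ and show that every $a_i$ lies in $\m$. Substituting and rearranging gives
$$\sum a_i f_i \;=\; \sum a_i g_i \,-\, \sum a_i \epsilon_i \;\in\; \m J + \m^N \;\subseteq\; \m I + \m^{N+1} + \m^N \;=\; \m I + \m^N,$$
where I used $\m J\subseteq \m(I+\m^N)\subseteq \m I+\m^{N+1}$. Since the left-hand side is manifestly in $I$, the modular law yields
$$\sum a_i f_i \;\in\; I\cap(\m I + \m^N) \;=\; \m I + (I\cap \m^N).$$
The hypothesis $N>\text{AR}(\m,I\subseteq R)$ then lets me invoke Artin-Rees to conclude $I\cap\m^N\subseteq \m I$, so $\sum a_i f_i\in \m I$. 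Minimality of $f_1,\ldots,f_r$ as a generating set of $I$ then forces each $a_i\in\m$, as required.

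The main obstacle, and indeed the only step that genuinely uses the Artin-Rees hypothesis on $N$, is the last containment $I\cap\m^N\subseteq \m I$: without the bound on $N$ the argument would stop at $\sum a_i f_i\in \m I + \m^N$, with no way to absorb the $\m^N$-part modulo $\m I$. One might hope to deduce the proposition from the inclusion $I^*\subseteq J^*$ supplied by Proposition \ref{P-starcontained}, but the minimal number of generators of an ideal and that of its initial ideal can differ in general, so containment of initial ideals does not directly produce the desired inequality $\mu(I)\le\mu(J)$. For this reason the direct lifting approach above seems cleanest.
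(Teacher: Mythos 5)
Your proof is correct and follows essentially the same route as the paper: lift a minimal generating set of $I$ into $J$ via the congruence, and use the modular law together with $I\cap\m^N\subseteq\m I$ (the Artin--Rees hypothesis on $N$) to show the lifted elements stay linearly independent in $J/\m J$. The only cosmetic difference is how you absorb $\m J$ into $\m I+\m^N$ (the paper writes $\m J+\m^N=\m(I+\m^{N-1})$), which changes nothing of substance.
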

\begin{proof}
Our choice of $N$ guarantees that $\m^N\cap I\subseteq \m I$. 
Consider $\{f_1,\ldots,f_m\}$ to be a minimal system of generators of $I$. As $I\equiv J \bmod \m^N$, there are $\epsilon_1,\ldots,\epsilon_m\in\m^N$ such that $f_i+\epsilon_i\in J$ for $i=1,\ldots,m.$ To finish the proof it is enough to show that $\{f_1+\epsilon_1,\ldots,f_m+\epsilon_m\}$ is part of a minimal system of generators of $J$. By Nakayama, this is equivalent to proving that the image of $\{f_1+\epsilon_1,\ldots,f_m+\epsilon_m\}$ in $J/\m J$ is an $R/\m$-linearly independent set; so supposing $a_1,\ldots,a_m\in R$ are such that $\sum_{i=1}^{m}{a_i(f_i+\epsilon_i)}\in\m J$, we need to show that $a_1,..,a_m\in\m$. Indeed, this implies that
\begin{equation*}
\begin{split}
\sum_{i=1}^{m}{a_if_i}\in I\cap\left(\m J+ \left(\sum_{i=1}^{m}{a_i\epsilon_i}\right)\right)& \subseteq I\cap(\m J+\m^N)=I\cap(\m(J+\m^{N-1}))
\\ & = I\cap(\m(I+\m^{N-1}))
\\ & = \m I+\m^N\cap I\subseteq \m I.
\end{split}
\end{equation*}
Because $\{f_1,\ldots,f_m\}$ is a minimal system of generators of $I$, we conclude that $a_1,..,a_m\in\m$. 
\end{proof}

Example \ref{Ex-BetaOne} ahead shows that in general the inequality in Proposition \ref{P-NumberGeneratorsInequality} can be strict. This means that, if we want to obtain preserverance of the Betti numbers unders small perturbations, we must require stronger hypothesis on the perturbed ideal. 
As it turns out, assuming that the Hilbert function is preserved will be enough to obtain equality of Betti numbers. 
The following proposition is the first step toward proving the main result.

\begin{proposition}\label{P-NumberGeneratorsEquality}
Let $(R,\m)$ be a Noetherian local ring, $I$ be an ideal of $R$ and $N$ be an integer such that $N>\emph{\text{AR}}(\m,I\subseteq R)$. Let $J$ be an ideal of $R$ such that $I\equiv J \bmod \m^N$, and $I$ and $J$ have the same Hilbert function. Then $\mu(I)=\mu(J)$. Moreover, there exist a minimal set of generators $\{f_1,\ldots,f_m\}$ of $I$ and $\epsilon_1,\ldots,\epsilon_m\in\m^N$ such that $J=(f_1+\epsilon_1,\ldots,f_m+\epsilon_m).$
\end{proposition}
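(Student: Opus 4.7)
The plan is to bootstrap Proposition \ref{P-NumberGeneratorsInequality}: that proposition already provides the inequality $\mu(I) \le \mu(J)$, and more importantly, its proof shows how to perturb any minimal system of generators of $I$ to obtain part of a minimal generating set of $J$. Concretely, I would fix a minimal system of generators $\{f_1,\ldots,f_m\}$ of $I$ and apply the construction in the proof of Proposition \ref{P-NumberGeneratorsInequality} to produce $\epsilon_1,\ldots,\epsilon_m \in \m^N$ such that each $f_i+\epsilon_i \in J$ and the collection $\{f_1+\epsilon_1,\ldots,f_m+\epsilon_m\}$ extends to a minimal generating set of $J$. Setting $L := (f_1+\epsilon_1,\ldots,f_m+\epsilon_m) \subseteq J$, the entire proposition then reduces to showing $L = J$, since this forces $\mu(J) \le m = \mu(I)$; combined with the reverse inequality it yields $\mu(I) = \mu(J)$ and identifies $\{f_i+\epsilon_i\}_{i=1}^m$ as a minimal system of generators of $J$.

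The key step is to show $L^* = J^*$ inside $\grm(R)$. Each $f_i+\epsilon_i$ differs from $f_i$ by an element of $\m^N$, so $L \equiv I \bmod \m^N$; the hypothesis $N > \text{AR}(\m,I \subseteq R)$ combined with Proposition \ref{P-starcontained} gives $I^* \subseteq L^*$. The inclusion $L \subseteq J$ gives $L^* \subseteq J^*$, and the equality $\text{HF}_{R/I} = \text{HF}_{R/J}$ combined with Proposition \ref{Cor-EqualStar} gives $I^* = J^*$. Chaining these, $I^* \subseteq L^* \subseteq J^* = I^*$, so $L^* = J^*$, hence $\grm(R/L) = \grm(R/J)$ as graded quotients of $\grm(R)$, and in particular $\text{HF}_{R/L}(n) = \text{HF}_{R/J}(n)$ for all $n \ge 0$.

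Finally, to deduce $L = J$ from the inclusion $L \subseteq J$ together with the equality of Hilbert functions, I would argue level by level in the $\m$-adic filtration. For each $n \ge 0$, both $R/(L+\m^{n+1})$ and $R/(J+\m^{n+1})$ are finite-length modules over the Artinian ring $R/\m^{n+1}$, with lengths $\sum_{i=0}^n \text{HF}_{R/L}(i)$ and $\sum_{i=0}^n \text{HF}_{R/J}(i)$ respectively; these sums coincide, so the natural surjection $R/(L+\m^{n+1}) \twoheadrightarrow R/(J+\m^{n+1})$ is an isomorphism, forcing $L+\m^{n+1} = J+\m^{n+1}$. Intersecting over all $n$ and invoking Krull's intersection theorem in the local ring $R/L$ yields $J \subseteq \bigcap_n (L + \m^{n+1}) = L$, hence $L = J$. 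I do not foresee a serious obstacle: the argument is essentially an interlocking application of the three previously established propositions, and the only point requiring attention is verifying that the single Artin-Rees bound $N > \text{AR}(\m, I \subseteq R)$ properly feeds each of Propositions \ref{P-NumberGeneratorsInequality}, \ref{P-starcontained}, and \ref{Cor-EqualStar}.
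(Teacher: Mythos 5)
Your proof is correct. The first half coincides with the paper's: both start from the perturbed minimal generators $f_i+\epsilon_i$ supplied by the proof of Proposition \ref{P-NumberGeneratorsInequality}, and both lean on $I^*=J^*$ from Proposition \ref{Cor-EqualStar}. Where you diverge is in showing that these $m$ elements actually generate $J$. The paper enlarges $\{f_1,\ldots,f_m\}$ to a minimal standard basis $\{f_i,g_j\}$ of $I$, perturbs each $g_j$ compatibly as a combination $\sum_k a_{jk}(f_k+\epsilon_k)$, observes that the initial forms are unchanged because $N>\text{AR}(\m,I\subseteq R)$ bounds all the orders $v(f_i),v(g_j)$, and concludes that the perturbed set is a standard basis of $J$ (hence a generating set, with the $g_j+\delta_j$ redundant by construction). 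You instead set $L=(f_1+\epsilon_1,\ldots,f_m+\epsilon_m)\subseteq J$, sandwich $I^*\subseteq L^*\subseteq J^*=I^*$ using Proposition \ref{P-starcontained} and monotonicity of initial ideals, and then prove from scratch that a sub-ideal with the same Hilbert function must be the whole ideal, via the telescoping length identity $\ell(R/(L+\m^{n+1}))=\sum_{i=0}^n\text{HF}_{R/L}(i)$ and Krull's intersection theorem in $R/L$. Your route avoids invoking the fact that a standard basis is a generating set (the citation to Herzog et al.\ that the paper uses at this point) at the cost of a slightly longer elementary argument at the end; the paper's route is more constructive, exhibiting explicitly how the redundant standard-basis elements of $J$ arise. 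Both are complete; the only hypotheses consumed are the single bound $N>\text{AR}(\m,I\subseteq R)$ and the equality of Hilbert functions, exactly as in the statement.
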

\begin{proof}
By Proposition \ref{Cor-EqualStar}, we have $I^*=J^*$. Consider a minimal standard basis $\{f_1,\ldots,f_m,\\ g_1,\ldots,g_r\}$ of $I$ such that $\{f_1,\ldots,f_m\}$ is a minimal set of generators of $I$. By the proof of Proposition \ref{P-NumberGeneratorsInequality} there are $\epsilon_1,\ldots,\epsilon_m\in\m^N$ for which $f_1+\epsilon_1,\ldots,f_m+\epsilon_m$ are part of a minimal set of generators of $J$. The proof will be concluded if we show that the $f_1+\epsilon_1,\ldots,f_m+\epsilon_m$ generate $J$. For each $j=1,\ldots,r$, write $g_j=\sum_{k=1}^{m}{a_{jk}f_k}$ and let $\delta_j=\sum_{k=1}^{m}{a_{jk}\epsilon_k}\in\m^N$, so that $g_j+\delta_j=\sum_{k=1}^{m}{a_{jk}(f_k+\epsilon_k)}\in J$. Since, by \cite[Proposition 2.1]{herzog2016homology}, $N>\text{AR}(\m,I\subseteq R)=\max\{v(f_1),\ldots,v(f_m), v(g_1),\ldots,v(g_r)\}$, we have
$$J^*=I^*=(f_1^*,\ldots,f_m^*,g_1^*,\ldots,g_r^*)=((f_1+\epsilon_1)^*,\ldots,(f_m+\epsilon_m)^*,(g_1+\delta_1)^*,\ldots,(g_r+\delta_r)^*),$$ 
thus showing that $\{f_1+\epsilon_1,\ldots,f_m+\epsilon_m,g_1+\delta_1,\ldots,g_r+\delta_r\}$ is a standard basis of $J$. As a consequence, it is also a generating set of $J$, meaning that \par
\vspace{\abovedisplayshortskip}
\hfill
$J=(f_1+\epsilon_1,\ldots,f_m+\epsilon_m,g_1+\delta_1,\ldots,g_r+\delta_r)=(f_1+\epsilon_1,\ldots,f_m+\epsilon_m).$ 
\end{proof}

We now come to our key result.

\begin{theorem}\label{T-perturbatingcomplex}
Let $I$ be an ideal of a Noetherian local ring $(R,\m)$ and let 
$$\begin{tikzcd}
\cdots \arrow[r] & F_2 \arrow[r, "d_2"] & F_1 \arrow[r, "d_1"] & F_0 \arrow[r] & R/I \arrow[r] & 0
\end{tikzcd}$$
be a minimal free resolution of $R/I$. Set $s_n=\emph{\text{AR}}(\m,\emph{\text{im}}(f_{n+1})\subseteq F_n)$ for all $n\ge 0$ and let $p$ and $N_0$ be natural numbers such that $N_0>\max\{s_i:i=0,...,p\}$.

For $N>N_0+\sum_{i=0}^{p-1}{s_i}$ the following holds.
Given an ideal $J$ such that $J \equiv I \bmod \m^N$ and such that $I$ and $J$ have the same Hilbert function, there exist maps $\delta_i:F_i\to \m^{N_0}F_{i-1}$, $i=1,...,p+1$, for which
$$\begin{tikzcd}[row sep=large,column sep = large]
F_{p+1} \arrow[r, "d_{p+1}+\delta_{p+1}"] & F_p \arrow[r, "d_p+\delta_p"] & \cdots \arrow[r, "d_2+\delta_2"] & F_1 \arrow[r, "d_1+\delta_1"] &  F_0 \arrow[r] & R/J \arrow[r] & 0,
\end{tikzcd}$$
is exact; moreover, $(\emph{\im}d_i)^*=(\emph{\im}(d_i+\delta_i))^*$ for all $i=1,...,p+1$ (here both initial modules are computed inside $\emph{\text{gr}}_{\m}(F_{i-1})$).
\end{theorem}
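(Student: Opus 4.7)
The plan is to build $\delta_1,\dots,\delta_{p+1}$ by induction on $i$, carrying three invariants at stage $i$: (a) the perturbed sequence $F_{i}\xrightarrow{d_i+\delta_i}F_{i-1}\to\cdots\to F_0\to R/J\to 0$ is a chain complex and is exact at all positions except possibly $F_{i}$; (b) $(\im d_j)^*=(\im(d_j+\delta_j))^*$ in $\grm(F_{j-1})$ for every $j\le i$; (c) $\delta_j(F_j)\subseteq\m^{M_j}F_{j-1}$, where $M_j:=N-\sum_{k=0}^{j-2}s_k$. The hypotheses on $N,N_0$ give $M_j\ge N_0>\max_{k\le p}s_k$ for every $j\le p+1$, so each $\delta_j$ indeed lands in $\m^{N_0}F_{j-1}$ as required. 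Invariant (b) is the delicate one: it forces $\text{AR}(\m,\im(d_j+\delta_j)\subseteq F_{j-1})=s_{j-1}$, since this Artin--Rees number equals the maximum degree of a minimal generating set of the initial module.

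The base case $i=1$ is Proposition \ref{P-NumberGeneratorsEquality}: with $f_1,\dots,f_m$ the columns of $d_1$, there exist $\epsilon_k\in\m^N$ such that $J=(f_1+\epsilon_1,\dots,f_m+\epsilon_m)$. Setting $\delta_1(e_k^{(1)}):=\epsilon_k$ yields $\text{coker}(d_1+\delta_1)=R/J$, while $(\im d_1)^*=I^*=J^*=(\im(d_1+\delta_1))^*$ follows from Proposition \ref{Cor-EqualStar} using the Hilbert function hypothesis.

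For the inductive step, let $z_k:=d_{i+1}(e_k^{(i+1)})$. Since $d_i(z_k)=0$, we have $(d_i+\delta_i)(z_k)=\delta_i(z_k)\in\im(d_i+\delta_i)\cap\m^{M_i}F_{i-1}$, and invariant (b) together with Artin--Rees gives
$$\im(d_i+\delta_i)\cap\m^{M_i}F_{i-1}\subseteq\m^{M_i-s_{i-1}}\im(d_i+\delta_i)=\m^{M_{i+1}}\im(d_i+\delta_i).$$
So there exists $\eta_k\in\m^{M_{i+1}}F_i$ with $(d_i+\delta_i)(\eta_k)=-\delta_i(z_k)$; setting $\delta_{i+1}(e_k^{(i+1)}):=\eta_k$ we obtain $(d_i+\delta_i)(d_{i+1}+\delta_{i+1})=0$ by construction, and (c) is preserved. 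Exactness at $F_i$ of the perturbed sequence then follows by applying Theorem \ref{T-EisenbudComplexPerturbation}(i) to the three-term subcomplex $F_{i+1}\to F_i\to F_{i-1}$, whose original homology at $F_i$ vanishes and whose two perturbation orders are both $\ge N_0>\max\{s_i,s_{i-1}\}$; the resulting initial module $H_i(C_\epsilon)^*$ must be zero, and by Krull's intersection $H_i(C_\epsilon)=0$.

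It remains to re-establish (b) at step $i+1$, i.e.\ $(\im d_{i+1})^*=(\im(d_{i+1}+\delta_{i+1}))^*$. The submodule congruence $\im d_{i+1}\equiv\im(d_{i+1}+\delta_{i+1})\pmod{\m^{M_{i+1}}F_i}$, valid because $M_{i+1}>s_i$, combined with Proposition \ref{P-starcontained} gives the inclusion $(\im d_{i+1})^*\subseteq(\im(d_{i+1}+\delta_{i+1}))^*$. Conversely, the exactness at $F_i$ just established identifies $(\im(d_{i+1}+\delta_{i+1}))^*=(\ker(d_i+\delta_i))^*$, and Corollary \ref{CorollaryEisenbud} exhibits this as a graded subquotient of $(\ker d_i)^*=(\im d_{i+1})^*$. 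A Hilbert--function count closes the circle: the subquotient relation bounds the Hilbert function of the former by that of the latter, the inclusion bounds it the other way, so the two initial modules have equal Hilbert functions, and the inclusion must be equality of graded submodules. The principal difficulty is precisely this coupling: constructing $\delta_{i+1}$ uses invariant (b) at step $i$ to perform the Artin--Rees reduction with the correct exponent, whereas re-establishing (b) at step $i+1$ requires the exactness at $F_i$ afforded by Eisenbud's theorem, so the three invariants must be propagated through the induction simultaneously.
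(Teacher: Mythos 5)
Your proposal is correct and follows essentially the same route as the paper's proof: the base case via Proposition \ref{P-NumberGeneratorsEquality}, the construction of $\delta_{i+1}$ by an Artin--Rees lifting made possible by the equality of initial modules at the previous stage, exactness via Theorem \ref{T-EisenbudComplexPerturbation}, and the equality $(\im d_{i+1})^*=(\im(d_{i+1}+\delta_{i+1}))^*$ from Proposition \ref{P-starcontained} combined with the graded-subquotient statement of Corollary \ref{CorollaryEisenbud}. The only differences are organizational (a single-pass induction on $i$ with explicit order bookkeeping $M_j$, and applying Eisenbud's theorem to a three-term subcomplex rather than the full truncation), which do not change the substance of the argument.
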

\begin{proof}
We prove the result by induction on $p$.

Suppose first $p=0$. The map $d_1:F_1\to F_0=R$ is given by a matrix $[f_1\cdots f_m]$, where $\{f_1,\ldots,f_m\}$ is a minimal generating set of $I$. Since $N>s_0=\text{AR}(\m,I\subseteq R)$, by Proposition \ref{P-NumberGeneratorsEquality} there exist $\epsilon_1,\ldots,\epsilon_m\in\m^N$ such that $\{f_1+\epsilon_1,\ldots,f_m+\epsilon_m\}$ is a minimal generating set of $J$. Thus it is enough to let $\delta_1:F_1\to R=F_0$ be given by the matrix $[\epsilon_1\cdots \epsilon_m]$. Indeed, as $I$ and $J$ have the same Hilbert function, by Proposition \ref{Cor-EqualStar} it follows that $(\im (d_1+\delta_1))^*=J^*=I^*=(\im d_1)^*$.

Suppose now that $p\ge 1$ and that the result is valid for $p-1$. We are assuming that $N>N_0+\sum_{i=0}^{p-1}s_i$, so, by induction hypothesis, for each $i=1,...,p$ there are maps $\delta_i:F_i\to\m^{N_0+s_{p-1}}F_{i-1}$ such that 
$$\begin{tikzcd}[row sep=large,column sep = large]
F_{p} \arrow[r, "d_{p}+\delta_{p}"] & \cdots \arrow[r, "d_2+\delta_2"] & F_1 \arrow[r, "d_1+\delta_1"] & F_0 \arrow[r] & R/J \arrow[r] & 0
\end{tikzcd}$$
is an exact complex. Also, $(\im d_p)^*=(\im(d_p+\delta_p))^*$. 
By \cite[Proposition 2.1]{herzog2016homology} this implies that $\text{AR}(\m,\im (d_{p}+\delta_{p})\subseteq F_{p-1})=\text{AR}(\m,\im d_{p}\subseteq F_{p-1})=s_{p-1}$. 
For each element $e$ of the canonical basis of $F_{p+1}$ we observe that 
$$(d_{p}+\delta_{p})(d_{p+1}(e))=\delta_{p}(d_{p+1}(e))\in \m^{N_0+s_{p-1}}F_{p-1}\cap \im (d_{p}+\delta_{p})\subseteq \m^{N_{0}}(\im (d_{p}+\delta_{p})),$$ 
hence we can find $e'\in \m^{N_0}F_{p}$ for which $(d_{p}+\delta_{p})(d_{p+1}(e))=(d_{p}+\delta_{p})(e')$. We define $\delta_{p+1}:F_{p+1}\to \m^{N_0}F_{p}$ by setting $\delta_{p+1}(e)=-e'$ for every element $e$ of the canonical basis of $F_{p+1}$. 
We thus have that
$$\begin{tikzcd}[row sep=large,column sep = large]
F_{\delta}: F_{p+1} \arrow[r, "d_{p+1}+\delta_{p+1}"] & F_{p} \arrow[r, "d_{p}+\delta_{p}"] & \cdots \arrow[r, "d_2+\delta_2"] & F_1 \arrow[r, "d_1+\delta_1"] & F_0 
\end{tikzcd}$$ is a complex. 
Next we show that $(\im (d_{p+1}+\delta_{p+1}))^*=(\im d_{p+1})^*$.
Since $N_0>s_{p}$ and $\im (d_{p+1}+\delta_{p+1})\equiv \im d_{p+1} \bmod \m^{N_0}F_p$, by Proposition \ref{P-starcontained} it follows that
$$(\ker d_{p})^*=(\im d_{p+1})^*\subseteq (\im (d_{p+1}+\delta_{p+1}))^*\subseteq (\ker (d_{p}+\delta_{p}))^*.
$$
Since also $N_0>s_{p-1}$, by Corollary \ref{CorollaryEisenbud} we have that $(\ker (d_{p}+\delta_{p}))^*$ is a graded subquotient of  $(\ker d_{p})^*$. This together with the inclusion $(\ker d_{p})^*\subseteq (\ker (d_{p}+\delta_{p}))^*$ implies the equality $(\ker d_{p})^*= (\ker (d_{p}+\delta_{p}))^*$. Consequently, $(\im (d_{p+1}+\delta_{p+1}))^*=(\im d_{p+1})^*$.

Finnaly, we show that complex $F_{\delta}$ is exact at $F_p$. 
$F_{\delta}$ is an $\m$-adic aproximation of 
$$\begin{tikzcd}[row sep=large,column sep = large]
F: F_{p+1} \arrow[r, "d_{p+1}"] & F_p \arrow[r, "d_p"] & \cdots \arrow[r, "d_2"] & F_1 \arrow[r, "d_1"] & F_0
\end{tikzcd} $$
of order $(\ldots,N_0,N_0,N_0,\ldots)$. Since $N_0>\max\{s_i:i=0,\ldots,p\}$, by Theorem \ref{T-EisenbudComplexPerturbation} we can conclude that $H_p(F_{\delta})^*$ is a subquotient of $H_p(F)^*=0$. Hence $H_p(F_{\delta})=0$, as we wanted to show.
\end{proof}

\begin{corollary}\label{T-SameBettiNumbers}
Let $I$ be an ideal of a Noetherian local ring $(R,\m)$ and let $p\in\N$. There exists $N\in\N$ with the following property. For every ideal $J$ such that $J \equiv I \bmod \m^N$, one has $\beta_i^R(R/I)=\beta_i^R(R/J)$ for all $0\le i\le p$, provided $I$ and $J$ have the same Hilbert function.

Consequently, if $N$ is large enough, for such an ideal $J$ we have that $R/I$ and $R/J$ have the same projective dimension. 
\end{corollary}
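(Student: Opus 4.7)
The plan is to derive both assertions essentially for free from Theorem \ref{T-perturbatingcomplex}, by observing that the exact complex it produces is automatically a \emph{minimal} partial free resolution of $R/J$.

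Let $(F_\bullet, d_\bullet)$ be a minimal free resolution of $R/I$. For a given $p$, I would fix $N$ as in Theorem \ref{T-perturbatingcomplex}, so that for every admissible $J$ we obtain maps $\delta_i$ with image in $\m^{N_0}F_{i-1}$ and an exact complex
\begin{equation*}
F_{p+1} \xrightarrow{d_{p+1}+\delta_{p+1}} F_p \xrightarrow{d_p+\delta_p} \cdots \xrightarrow{d_1+\delta_1} F_0 \longrightarrow R/J \longrightarrow 0.
\end{equation*}
Since each $d_i$ is part of a minimal resolution its image lies in $\m F_{i-1}$, and since $N_0\ge 1$ the same is true of $\delta_i$; hence every differential $d_i+\delta_i$ above has image in $\m F_{i-1}$. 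Extending this complex past $F_{p+1}$ to a full free resolution of $R/J$ and tensoring with $k=R/\m$, the first $p+1$ differentials become zero, so $\Tor_i^R(R/J,k)\cong F_i\otimes_R k$ for $0\le i\le p$. This yields $\beta_i^R(R/J)=\mathrm{rank}(F_i)=\beta_i^R(R/I)$ and proves the first assertion.

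For the projective dimension statement, I would split into two cases. When $\mathrm{pd}_R(R/I)=p_0<\infty$, I would apply the previous step with $p=p_0$. Since $F_{p_0+1}=0$, the complex from Theorem \ref{T-perturbatingcomplex} collapses to $0\to F_{p_0}\to\cdots\to F_0\to R/J\to 0$, which is a minimal free resolution of $R/J$; combined with $\beta_{p_0}^R(R/J)=\beta_{p_0}^R(R/I)\ne 0$, this gives $\mathrm{pd}_R(R/J)=p_0$.

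The subtler case, and the main obstacle, is $\mathrm{pd}_R(R/I)=\infty$: here one must rule out $\mathrm{pd}_R(R/J)<\infty$ for \emph{every} admissible $J$ using a single choice of $N$. I would invoke the Auslander--Buchsbaum formula, which bounds the projective dimension of any finitely generated $R$-module of finite projective dimension by $\mathrm{depth}(R)<\infty$. Choosing $p=\mathrm{depth}(R)+1$, the Betti number equality forces $\beta_p^R(R/J)=\beta_p^R(R/I)\ne 0$ (the nonvanishing on the right holds because $\mathrm{pd}_R(R/I)=\infty$ entails $\beta_i^R(R/I)\ne 0$ for all $i$). Hence $\mathrm{pd}_R(R/J)\ge p>\mathrm{depth}(R)$, which by Auslander--Buchsbaum can only happen when $\mathrm{pd}_R(R/J)=\infty$, as desired.
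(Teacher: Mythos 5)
Your proposal is correct and follows essentially the same route as the paper: both deduce from Theorem \ref{T-perturbatingcomplex} that the perturbed exact complex is minimal (since $\im(\delta_i)\subseteq\m^{N_0}F_{i-1}\subseteq\m F_{i-1}$), giving the Betti number equality, and both settle projective dimension via the boundedness of finite projective dimension over a local ring. The only cosmetic difference is that the paper uses a single choice $p=\dim(R)+1$ for both cases, while you split into cases using $p=\mathrm{pd}_R(R/I)$ and $p=\mathrm{depth}(R)+1$ respectively.
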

\begin{proof}
Fix a minimal free resolution 
$$\begin{tikzcd}[row sep=large,column sep = large]
\cdots \arrow[r] & F_{p+1} \arrow[r, "d_{p+1}"] & F_p \arrow[r, "d_p"] & \cdots \arrow[r, "d_2"] & F_1 \arrow[r, "d_1"] & F_0 
\end{tikzcd} $$
of $R/I$. By Theorem \ref{T-perturbatingcomplex}, given $N\gg 0$, there is an exact complex 
$$\begin{tikzcd}[row sep=large,column sep = large]
F_{\delta}: F_{p+1} \arrow[r, "d_{p+1}+\delta_{p+1}"] & F_p \arrow[r, "d_p+\delta_p"] & \cdots \arrow[r, "d_2+\delta_2"] & F_1 \arrow[r, "d_1+\delta_1"] &  F_0 \arrow[r] & R/J \arrow[r] & 0,
\end{tikzcd}$$
where $\im(\delta_i)\subseteq \m F_{i-1}$ for every $i=1,...,p+1$. Since the entries of the matrixes representing the maps $d_i$ are all in $\m$, the same is true for all the entries of the matrixes representing the maps $d_i+\delta_i$. Thus, $F_{\delta}$ is part of a minimal free resolution of $R/J$. We conclude that $\beta_i^R(R/J)=\text{rank}F_i=\beta_i^R(R/I)$ for all $0\le i\le p$.

Let $n=\dim (R)$. For $N\gg 0$ we have that $\beta_i^R(R/J)=\beta_i^R(R/I)$ for all $0\le i\le n+1$. Denote by $\text{pd}_R(M)$ the projective dimension of a finitely generated $R$-module $M$. If $\text{pd}_R(R/I)=\infty$, then $\beta_{n+1}^R(R/J)=\beta_{n+1}^R(R/I)\neq 0$ and we conclude that $\text{pd}_R(R/J)=\infty$. Otherwise, if $\text{pd}_R(R/I)<\infty$, then $\text{pd}_R(R/I)=\max\{i:i\le n, \beta_i^R(R/I)\neq 0\}=\max\{i:i\le n,\beta_i^R(R/J)\neq 0\}=\text{pd}_R(R/J)$.
\end{proof}

\begin{remark} Notice that, unless $I$ has finite projective dimension, our proof does not show the existence of an integer $N$ such that $\beta_i^R(R/I) = \beta_i^R(R/J)$ for all $i \geq 0$. This is because the integer $N$ we pick in Theorem \ref{T-perturbatingcomplex} depends on the integer $p$ we fix at the beginning. Therefore, we ask the following question in the case of infinite projective dimension.
\end{remark}

\begin{question}
Let $I$ be an ideal of a Noetherian local ring $R$. Does there exist $N>0$ such that, for every ideal $J$ with $J\equiv I\bmod \m^N$ and with the same Hilbert function as $I$, all the Betti numbers of $R/I$ and $R/J$ coincide?
\end{question}

We believe that techniques similar to the ones used in this section can be applied to control other invariants under perturbations. In fact, inspired by the work of Pham Hung Quy and Van Duc Trung in \cite{quy2020small}, in upcoming work \cite{duarte2021localcohomology} we will use these methods to continue the study of local cohomology modules under small perturbations, provided the Hilbert function is preserved.


\section{Applications and examples}\label{SectionHilbert}

We start this section with some examples proving that the hypothesis on the results of the previous section cannot be relaxed. 

Proposition \ref{P-NumberGeneratorsInequality} shows that, given an ideal $I\subseteq R$, there exists $N>0$ such that $\beta_1^R(R/I)\le \beta_1^R(R/J)$ for all $J$ such that $I\equiv J\bmod \m^N$. The following example proves that this inequality cannot be extended to higher Betti numbers. Moreover, it shows that the assumption on the Hilbert function in Theorem \ref{T-perturbatingcomplex} cannot be dropped.

\begin{example}\label{ExampleConjecture}
Let $k$ be a field, and $R=k\llbracket x,y,z,w \rrbracket$. Consider the ideal $I=(f_1,f_2,f_3,f_4)$, where $f_1=x^2+z^5, f_2=xy+z^5, f_3=xz+w^5$ and $f_4=zw$, inside $R$. 
The minimal free resolution of $R/I$ over $R$ is
$$\begin{tikzcd}
0 \arrow[r] & R \arrow[r] & R^6 \arrow[r] & R^8 \arrow[r] & R^4 \arrow[r] & R \arrow[r] & R/I \arrow[r] & 0.
\end{tikzcd}$$ 
Now consider the ring $A=R\llbracket t_1,t_2,t_3,t_4 \rrbracket$, and let $J$ be the ideal generated by $f_1,f_2,f_3,f_4$ in $A$. Moreover, for every positive integer $N$, let $J_N$ be the ideal of $A$ generated by $f_1-t_1^N, f_2-t_2^N,f_3-t_3^N, f_4-t_4^N$, which is a regular sequence in $A$. We then have that $\beta_i^A(A/J_N) = \binom{4}{i}$  for all $N$ and all $0 \le i \leq 4$. On the other hand, $\beta_i^A(A/J) = \beta_i^R(R/I)$ for all $i$ and, in particular, $\beta_2^A(A/J) = 8$. Let $\m$ be the maximal ideal of $A$. This shows that, for every $N$, there exists $J_N \equiv J \bmod \m^N$ such that $\beta_2^A(A/J_N)<\beta_2^A(A/J)$.
\end{example}


Observe that in the previous example the ideals $J$ and $J_N$ have different heights. The next example shows that, even if the height is preserved, the Betti numbers of an ideal and its perturbations can be different.

\begin{example} \label{Example nCM}
Consider $R=k\llbracket x,y,z \rrbracket$ and $I=(x^2,y)$. Then $R/I$ is Cohen-Macaulay of dimension one. For every $N>0$ the ideal $I_N=(x^2,xy,y-z^N)$ is such that $I\equiv I_N\bmod \m^N$. $R/I_N$ has dimension one but is not Cohen-Macaulay. The minimal free resolution of $R/I_N$ is 
$$\begin{tikzcd}
0 \arrow[r] &R \arrow[r] & R^3 \arrow[r] & R^3 \arrow[r] & R \arrow[r] & 0,
\end{tikzcd}$$
so the Betti numbers of $R/I$ and $R/I_N$ are different. 
 
Observe that $R/I$ and $R/I_N$ have different Hilbert functions for every $N>0$. According to Proposition \ref{Cor-EqualStar}, for $N>\text{AR}((x,y,z),I\subseteq R)=2$ this can be seen by observing that $I^*\neq I_N^*$. Indeed, $xz^N\in I_N^*\smallsetminus I^*$. 
\end{example}



We now present some applications of our main result. There are several classes of rings where the assumptions of Theorem \ref{T-perturbatingcomplex} and Corollary \ref{T-SameBettiNumbers} are satisfied. An important result in this direction was recently proved by Ma, Pham Hung Quy and Smirnov. 

\begin{theorem}\label{Theorem-MaQuy}\emph{[\cite{ma2019filter}, Theorem 14]}
Let $(R,\m)$ be a Noetherian local ring and suppose $f_1,\ldots, f_r$ is a filter-regular sequence in $R$. Then there exists $N>0$ such that $R/(f_1,\ldots,f_r)$ and $R/(f_1+\epsilon_1,\ldots,f_r+\epsilon_r)$ have the same Hilbert function for every $\epsilon_1,\ldots,\epsilon_r\in\m^N$.
\end{theorem}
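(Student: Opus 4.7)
My plan is to apply Proposition \ref{Cor-EqualStar} to reduce the Hilbert function equality to the statement that the initial ideals $I^* = (f_1,\ldots,f_r)^*$ and $J^* = (f_1+\epsilon_1,\ldots,f_r+\epsilon_r)^*$ coincide in $\gr_\m(R)$, for $N$ sufficiently large. One inclusion, $I^* \subseteq J^*$, follows from Proposition \ref{P-starcontained} as soon as $N > \text{AR}(\m, I \subseteq R)$. The substance of the theorem is the reverse inclusion $J^* \subseteq I^*$, where filter-regularity must be essential; indeed, Example \ref{Example nCM} exhibits a (non-filter-regular) situation in which the Hilbert function strictly decreases under small perturbations.

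I would proceed by induction on $r$. For the base case $r=1$, set $f = f_1$ and let $t$ satisfy $\m^t(0:_R f) = 0$, which exists by filter-regularity. Choose $N$ larger than $\text{AR}(\m,(f)\subseteq R) + t + v(f)$. Given $\epsilon \in \m^N$ and $g = a(f+\epsilon) \in (f+\epsilon)$, write $g = af + a\epsilon$. When $v(af) > v(a) + v(f)$, so that $af \in \m^{v(af)} \cap (f)$, the Artin-Rees lemma lets me decompose $a = a_0 + a'$ with $a_0 \in (0:_R f)$ and $a'$ of suitably large valuation; the annihilator bound $\m^t(0:_R f) = 0$ kills $a_0 \epsilon$, and the size of $N$ forces $v(a'\epsilon) > v(af)$. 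This eliminates the possibility $v(g) = v(a\epsilon) < v(af)$, and a parallel argument excludes cancellation in the case $v(af) = v(a\epsilon)$. Consequently $v(g) = v(af)$ and $g^* = (af)^* \in (f)^*$, giving $(f+\epsilon)^* \subseteq (f)^*$ and, together with the easy reverse inclusion, the equality $(f)^* = (f+\epsilon)^*$.

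For the inductive step, I first verify that $f_1+\epsilon_1,\ldots,f_r+\epsilon_r$ remains filter-regular for $N$ large. The base case applied in $R$ gives filter-regularity of $f_1+\epsilon_1$ together with the identification $\gr_\m(R/(f_1)) \cong \gr_\m(R/(f_1+\epsilon_1))$, both equal to $\gr_\m(R)/(f_1)^*$; one then propagates this fact through each intermediate quotient. With the perturbed sequence filter-regular, I compute $\text{HF}_{R/I}$ via the nested quotient $R/(f_1) \to R/I$ and similarly $\text{HF}_{R/J}$ via $R/(f_1+\epsilon_1) \to R/J$. Applying the inductive hypothesis to the filter-regular images $\overline{f_2},\ldots,\overline{f_r}$ in $R/(f_1)$, and matching the perturbed sequences via the associated graded identification, concludes $\text{HF}_{R/I} = \text{HF}_{R/J}$.

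The main obstacle is the base case: the Artin-Rees-plus-annihilator argument must be executed carefully to justify the decomposition $a = a_0 + a'$ with $a'$ carrying the right valuation, and the threshold for $N$ must be made to depend only on $\text{AR}(\m,(f)\subseteq R)$, $t$, and $v(f)$. A secondary obstacle lies in the inductive step: although $R/(f_1)$ and $R/(f_1+\epsilon_1)$ are distinct local rings, they share isomorphic associated graded rings; transferring the inductive hypothesis between them requires lifting the perturbed sequences in a consistent way and ensuring that the outer choice of $N$ dominates the cumulative thresholds from every level of the induction.
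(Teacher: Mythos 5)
This statement is quoted from \cite{ma2019filter} (Theorem 14) and the paper gives no proof of it, so there is no internal argument to compare against; I will therefore assess your proposal on its own terms. Your reduction to $I^*=J^*$ via Propositions \ref{Cor-EqualStar} and \ref{P-starcontained} is the right first move, and your base case $r=1$ is essentially correct: for $a$ with $af\neq 0$, Artin--Rees gives $af=fc$ with $v(c)\ge v(af)-s$, so $a-c\in(0:f)$ is killed by $\m^N$ once $N\ge t$, whence $a\epsilon=c\epsilon$ has order $\ge v(af)+N-s>v(af)$ and $(a(f+\epsilon))^*=(af)^*\in (f)^*$. (In fact this works uniformly, without your case split on $v(af)$ versus $v(a)+v(f)$, and with the milder threshold $N>\max\{s,t\}$.)

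The inductive step, however, has a genuine gap. The inductive hypothesis is a statement about perturbing a \emph{fixed} filter-regular sequence inside a \emph{fixed} local ring; to use it you would need to view $\overline{f_2+\epsilon_2},\ldots,\overline{f_r+\epsilon_r}$ as a perturbation of $\overline{f_2},\ldots,\overline{f_r}$ inside $R/(f_1)$, but these elements live in $R/(f_1+\epsilon_1)$, which is in general \emph{not} isomorphic to $R/(f_1)$ --- the failure of such isomorphisms is precisely the subject of the Samuel--Hironaka--Cutkosky--Srinivasan literature recalled in the introduction. An isomorphism of associated graded rings $\gr_\m(R/(f_1))\cong\gr_\m(R/(f_1+\epsilon_1))$ (which is what your base case actually yields) does not let you transport either the sequence or the Hilbert-function computation of the further quotient, since $\mathrm{HF}_{R/J}$ is not determined by $\gr_\m(R/(f_1+\epsilon_1))$ together with the initial forms of the remaining elements. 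Two further points are left unaddressed: the threshold $N'$ produced by the inductive hypothesis depends on the residual sequence, which varies with $\epsilon_1$, so uniformity over all $\epsilon_1\in\m^N$ needs its own argument; and the filter-regularity of the perturbed sequence does not ``follow from the base case'' (the base case controls $(f_1+\epsilon_1)^*$, not $\mathrm{Ass}(0:(f_1+\epsilon_1))$), although that particular fact is true and is proved in \cite{ma2019filter}. As a minor aside, Example \ref{Example nCM} is not a counterexample coming from a non-filter-regular sequence: $x^2,y$ is a regular sequence there, and the point of that example is that $I_N$ is a perturbation of the ideal which is not a perturbation of a fixed generating set.
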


As a consequence, we obtain the following.

\begin{corollary}\label{T-SameBettiNumbersFilter}
Let $(R,\m)$ be a Noetherian local ring and let $p\in\N$. Suppose $f_1,\ldots, f_r$ is a filter-regular sequence in $R$. Then there exists $N>0$ such that, for every $\epsilon_1,\ldots,\epsilon_r\in\m^N$, we have
$$\beta_i^R(R/(f_1,\ldots,f_r))=\beta_i^R(R/(f_1+\epsilon_1,\ldots,f_r+\epsilon_r))$$ for every $0\le i\le p$.
\end{corollary}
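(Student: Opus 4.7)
The plan is to deduce Corollary \ref{T-SameBettiNumbersFilter} by combining the two main inputs already available: Theorem \ref{Theorem-MaQuy} of Ma, Pham Hung Quy and Smirnov, which supplies preservation of the Hilbert function under small perturbations of a filter-regular sequence, and Corollary \ref{T-SameBettiNumbers}, which upgrades such preservation to equality of the first $p+1$ Betti numbers for an arbitrary perturbation of an ideal with unchanged Hilbert function.

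Concretely, set $I=(f_1,\ldots,f_r)$. First I would apply Theorem \ref{Theorem-MaQuy} to the filter-regular sequence $f_1,\ldots,f_r$ to obtain an integer $N_1>0$ such that, for all $\epsilon_1,\ldots,\epsilon_r\in\m^{N_1}$, the ideal $I_\epsilon:=(f_1+\epsilon_1,\ldots,f_r+\epsilon_r)$ has the same Hilbert function as $I$. Next I would apply Corollary \ref{T-SameBettiNumbers} to $I$ and the fixed integer $p$, obtaining an integer $N_2>0$ such that every ideal $J$ with $J\equiv I\bmod \m^{N_2}$ and $\text{HF}_{R/J}=\text{HF}_{R/I}$ satisfies $\beta_i^R(R/I)=\beta_i^R(R/J)$ for $0\le i\le p$.

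Finally, I would set $N=\max\{N_1,N_2\}$. For any $\epsilon_1,\ldots,\epsilon_r\in\m^N$, one has $f_i+\epsilon_i\in I+\m^N$ and $f_i\in I_\epsilon+\m^N$, so $I_\epsilon\equiv I\bmod \m^N$ in the sense required by Corollary \ref{T-SameBettiNumbers}. Since $N\ge N_1$, Theorem \ref{Theorem-MaQuy} guarantees $\text{HF}_{R/I_\epsilon}=\text{HF}_{R/I}$, so both hypotheses of Corollary \ref{T-SameBettiNumbers} are met (using $N\ge N_2$), and the conclusion $\beta_i^R(R/I)=\beta_i^R(R/I_\epsilon)$ for $0\le i\le p$ follows at once. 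Since this is just a direct combination of the two cited results, there is no real obstacle; the only minor point to verify explicitly is that a perturbation of the generators of $I$ by elements of $\m^N$ does produce an ideal congruent to $I$ modulo $\m^N$, which is immediate.
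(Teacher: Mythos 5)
Your proposal is correct and is exactly the argument the paper intends: the corollary is stated as an immediate consequence of Theorem \ref{Theorem-MaQuy} combined with Corollary \ref{T-SameBettiNumbers}, and your choice of $N=\max\{N_1,N_2\}$ together with the observation that $I_\epsilon\equiv I\bmod\m^N$ is precisely the (omitted) verification.
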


We now prove two other results on the invariance of the Hilbert function under small perturbations. This provides further scenarios in which Corollary \ref{T-SameBettiNumbers} can be applied, that is, scenarios in which a finite number of Betti numbers is preserved under suitable perturbations.

We first need to recall some standard notions. Let $I$ be an ideal of $R$. The \emph{Hilbert series} $\text{HS}_{R/I}(t)=\sum_{n\ge 0}{\text{HF}_{R/I}(n)t^n}$ can be written as a quotient $f(t)/(1-t)^a$, with $f(t)\in\mathbb Z[t]$ and $a\in\N$. If $f(t)/(1-t)^a$ is written in irreducible form $h(t)/(1-t)^d$, i.e., with $h(1)\neq 0$, then $d=\dim(R/I)$. The polynomial $h(t)$ is called the $h$\emph{-polynomial} of $R/I$ and $e=h(1)$ is called the \emph{multiplicity} of $R/I$. 

There exists a polynomial $\text{HP}_{R/I}(X)\in\mathbb Q[X]$ of degree $d-1$, called the \emph{Hilbert polynomial} of $R/I$, such that $\text{HF}_{R/I}(n)=\text{HP}_{R/I}(n)$ for $n\gg 0$. Moreover, the coefficient of $X^{d-1}$ in $\text{HP}_{R/I}(X)$ coincides with $e/(d-1)!$. 
The \emph{regularity index} of $R/I$ is the smallest natural number $s$ for which $\text{HF}_{R/I}(n)=\text{HP}_{R/I}(n)$ for all $n\ge s$. If $l$ is the degree of the $h$-polynomial of $R/I$, then $s\le l-d+1$.

\vspace{0.2cm}
Our next theorem extends a result due to J. Elias \cite{elias1986analytic} in two directions. First, it applies to rings of any positive dimension. Secondly, it does not require the hypothesis that $R/I$ is Cohen-Macaulay.

\begin{theorem}\label{P-HighDepthPertutbation}
Let $(R,\m)$ be a Noetherian local ring and $I$ an ideal of $R$.
There exists $N>0$ such that, for every ideal $J \subseteq R$ satisfying the following conditions

\begin{itemize}
\item[(i)] $I\equiv J \bmod \m^N$,
\item[(ii)] $\dim(R/J)=\dim(R/I)=d$,
\item[(iii)] $R/J$ is Cohen-Macaulay,
\item[(iv)] $\emph{\text{depth }}\emph{\text{gr}}_{\m}(R/J)\ge d-1$,
\end{itemize}
one has that $I$ and $J$ have the same Hilbert function. 
\end{theorem}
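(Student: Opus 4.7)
The plan is to combine two complementary sources of Hilbert function information. The first is automatic from the congruence: $I \equiv J \bmod \m^N$ forces $I + \m^k = J + \m^k$ for every $k \leq N$ (since $\m^k \supseteq \m^N$), which directly yields $\text{HF}_{R/I}(n) = \text{HF}_{R/J}(n)$ for every $n < N$ without using hypotheses (ii)--(iv). The second is the regularity theory for Hilbert functions: beyond the regularity index $s$, the Hilbert function coincides with the Hilbert polynomial, of degree $d-1$ in $n$. I would pick $N$ large enough that the low-degree agreement window from the congruence falls beyond the regularity indices of both $R/I$ and $R/J$, giving at least $d$ points of agreement between two polynomials of degree $d-1$, and hence equality of Hilbert polynomials.

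First I would note that the regularity index $s_I$ of $R/I$ is an invariant depending only on $I$, so beyond $s_I$ the Hilbert function of $R/I$ equals $\text{HP}_{R/I}$. The crux is then to produce a uniform bound $s_J \leq C$ with $C$ depending only on $I$, and this is precisely where hypotheses (iii)--(iv) on $R/J$ enter. After passing, if necessary, to the faithfully flat extension $R[X]_{\m R[X]}$ (which preserves dimensions, Cohen--Macaulayness, Hilbert functions, and depths of associated graded rings), we may assume the residue field is infinite. The assumption $\text{depth gr}_\m(R/J) \geq d-1$ then provides $y_1,\ldots,y_{d-1} \in \m$ whose initial forms form a regular sequence in $\text{gr}_\m(R/J)$, giving $\text{gr}_\m(R/(J,y_1,\ldots,y_{d-1})) = \text{gr}_\m(R/J)/(y_1^*,\ldots,y_{d-1}^*)$, a one-dimensional Cohen--Macaulay graded ring with the same $h$-polynomial $h_J$ as $R/J$. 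Since the regularity index of a one-dimensional Cohen--Macaulay local ring is controlled by $\deg h_J$, and $\deg h_J$ is in turn bounded in terms of the multiplicity $e(R/J)$ by classical Hilbert function theory (results of Sally, Rossi--Valla), and since Proposition \ref{P-starcontained} applied for $N > \text{AR}(\m, I \subseteq R)$ gives $\text{HF}_{R/I}(n) \geq \text{HF}_{R/J}(n)$ and hence $e(R/J) \leq e(R/I)$, we obtain the sought constant $C$ depending only on $I$.

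With $s_I$ and $C$ in hand, I would choose $N > \max\{\text{AR}(\m, I \subseteq R),\, s_I + d,\, C + d\}$, a bound depending only on $I$. The set $\{n : \max(s_I, C) \leq n < N\}$ then contains at least $d$ integers, and for each such $n$ the previous steps give $\text{HP}_{R/I}(n) = \text{HF}_{R/I}(n) = \text{HF}_{R/J}(n) = \text{HP}_{R/J}(n)$. Two polynomials of degree $d-1$ agreeing at $d$ distinct points are equal, so $\text{HP}_{R/I} = \text{HP}_{R/J}$. Finally, for $n < N$ the Hilbert functions agree by the congruence, while for $n \geq N > \max(s_I, s_J)$ both coincide with the common Hilbert polynomial; hence $\text{HF}_{R/I} = \text{HF}_{R/J}$ as desired.

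The main obstacle is the uniform bound on $s_J$ obtained in the second paragraph. Without hypotheses (iii)--(iv), the regularity index of $R/J$ could in principle grow faster than any function of the invariants of $I$, and the polynomial-matching argument would fail; this is exactly why the two hypotheses appear in the statement. The classical theory of Hilbert functions of Cohen--Macaulay local rings with almost Cohen--Macaulay associated graded is what enables the reduction to the one-dimensional case and the subsequent control of $\deg h_J$ in terms of $e(R/J)$, hence in terms of $e(R/I)$.
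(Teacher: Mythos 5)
Your proposal is correct and follows essentially the same route as the paper: the congruence gives agreement of Hilbert functions up to degree $N-1$, the inequality $\text{HF}_{R/I}\ge\text{HF}_{R/J}$ gives $e(R/J)\le e(R/I)$, hypotheses (iii)--(iv) together with Sally's machine and the Rossi--Valla bound control $\deg h_J$ by $e(R/J)-1$ and hence the regularity index of $R/J$ by a constant depending only on $I$, and matching the two Hilbert polynomials of degree $d-1$ at $d$ points finishes the argument. The only difference is that you spell out the Sally-machine reduction to dimension one (after extending the residue field), which the paper simply cites.
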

\begin{proof}
Let $e$ and $e'$ be, respectively, the multiplicities of $R/I$ and $R/J$. Given $N>\text{AR}(\m,I\subseteq R)$, by Proposition \ref{Cor-EqualStar} we have that $\text{HF}_{R/I}(n)\ge \text{HF}_{R/J}(n)$ for every $n\in\N$, hence
$$e'=(d-1)!\lim_{n\to\infty}\frac{\text{HF}_{R/J}(n)}{n^{d-1}}\le (d-1)!\lim_{n\to\infty}\frac{\text{HF}_{R/I}(n)}{n^{d-1}}=e.$$ 
From a combination of Sally's machine (see \cite[Theorem 2.4]{rossi2011hilbert}) and \cite[Proposition 2.7]{rossi2010hilbert}, hypothesis (iii) and (iv) imply that the degree of the $h$-polynomial of $R/J$ is at most $e'-1 \le e-1$. Consequently, the regularity index of $R/J$ is $s'\le (e-1)-d+1=e-d.$  
Let $s$ be the regularity index of $R/I$ and choose $N\ge\max\{s,e-d\}+d$. 

From $I\equiv J \bmod \m^N$ we have $$\text{HF}_{R/J}(n)=\ell\left(\frac{J+\m^n}{J+\m^{n+1}}\right)=\ell\left(\frac{I+\m^n}{I+\m^{n+1}}\right)= \text{HF}_{R/I}(n)$$ for all $0\le n\le N-1$. Since $\max\{s,s'\}\le N-d$, it follows that the Hilbert polynomials of $R/I$ and $R/J$ coincide at the $d$ points $N-d,\ldots,N-1$. As both polynomials have degrees $d-1$, it follows that the two Hilbert polynomials must be the same. We conclude that the Hilbert functions of $R/I$ and $R/J$ are also the same.
\end{proof}

\begin{remark} Example \ref{Example nCM} shows that the assumption that $R/J$ is Cohen-Macaulay is indeed needed in Theorem \ref{P-HighDepthPertutbation} to conclude that $R/I$ and $R/J$ have the same Hilbert function.
\end{remark}

\begin{corollary}\label{Cor-EliasGen}
Let $(R,\m)$ be a Noetherian local ring, $I$ an ideal of $R$ and $p\in\N$. There exists $N>0$ such that, for every ideal $J$ satisfying the conditions of Theorem \ref{P-HighDepthPertutbation}, we have $\beta_i^R(R/I)=\beta_i^R(R/J)$ for every $0\le i\le p$.
\end{corollary}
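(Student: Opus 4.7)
The plan is to simply combine the two main results already established. Corollary \ref{T-SameBettiNumbers} guarantees the existence of an integer $N_1 > 0$, depending on $I$ and $p$, such that for every ideal $J$ with $J \equiv I \bmod \m^{N_1}$ and $\mathrm{HF}_{R/I} = \mathrm{HF}_{R/J}$, one has $\beta_i^R(R/I) = \beta_i^R(R/J)$ for all $0 \le i \le p$. Separately, Theorem \ref{P-HighDepthPertutbation} provides an integer $N_2 > 0$, depending only on $I$, such that every ideal $J$ satisfying conditions (i)--(iv) of that theorem (with $N_2$ in place of $N$) has the same Hilbert function as $I$.

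The natural choice is then to set $N = \max\{N_1, N_2\}$. Given any ideal $J$ satisfying the conditions of Theorem \ref{P-HighDepthPertutbation} with respect to this $N$, we have in particular $J \equiv I \bmod \m^{N_2}$, so Theorem \ref{P-HighDepthPertutbation} yields $\mathrm{HF}_{R/I} = \mathrm{HF}_{R/J}$. Since also $J \equiv I \bmod \m^{N_1}$, Corollary \ref{T-SameBettiNumbers} then delivers the equality of the first $p+1$ Betti numbers.

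There is no serious obstacle here: the statement is a direct concatenation of the two results, and the only subtlety is to check that taking the maximum of the two thresholds is legitimate, which is immediate since increasing $N$ only tightens the congruence condition $J \equiv I \bmod \m^N$ without affecting any of the other hypotheses (i)--(iv). Thus the proof amounts to a couple of lines invoking Theorem \ref{P-HighDepthPertutbation} followed by Corollary \ref{T-SameBettiNumbers}.
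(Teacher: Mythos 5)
Your proof is correct and is exactly the paper's argument: the paper's own proof is the one-line "Follows from Theorem \ref{P-HighDepthPertutbation} and Corollary \ref{T-SameBettiNumbers}," and your write-up just makes explicit the (harmless) step of taking the maximum of the two thresholds.
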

\begin{proof}
Follows from Theorem \ref{P-HighDepthPertutbation} and Corollary \ref{T-SameBettiNumbers}.
\end{proof}

We recall that the \emph{embedding codimension} of $R/I$ is defined as $$h=\mu(\m/I)-d=\text{HF}_{R/I}(1)-d,$$ where $d$ is the dimension of $R/I$. It coincides with the coefficient of the degree one term in the $h$-polynomial of $R/I$.

In the following result we show that, if $R/I$ has almost minimal multiplicity, then the hypothesis that $\text{depth }\grm(R/J)\ge \dim(R/I)-1$ in Theorem can be removed. 


\begin{proposition}\label{P-MinMultPertutbation}
Let $(R,\m)$ be a Noetherian local ring, $I$ an ideal of $R$ and $p\in\N$. Suppose $e\le h+2$, where $e$ and $h$ are respectively the multiplicity and embedding codimension of $R/I$. Then there exists $N>0$ with the following property. If $J$ is an ideal such that $I \equiv J \bmod \m^N$, then $\beta_i^R(R/I)=\beta_i^R(R/J)$ for every $0\le i\le p$, provided $R/J$ is Cohen-Macaulay of the same dimension as $R/I$.
\end{proposition}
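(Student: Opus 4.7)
The plan is to reduce the statement to Theorem \ref{P-HighDepthPertutbation} combined with Corollary \ref{T-SameBettiNumbers}, by arguing that under the given assumptions $R/J$ automatically enjoys $\text{depth}\,\grm(R/J)\ge d-1$, where $d=\dim(R/I)=\dim(R/J)$.

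I would first choose $N\ge 2$ and $N>\text{AR}(\m,I\subseteq R)$, and also sufficiently large that the bounds provided by Theorem \ref{P-HighDepthPertutbation} and Corollary \ref{T-SameBettiNumbers} are satisfied. Then, given $J$ with $I\equiv J\bmod\m^N$, the condition $N\ge 2$ yields $I+\m^2=J+\m^2$, whence $\text{HF}_{R/I}(1)=\text{HF}_{R/J}(1)$. Since the dimensions agree, this gives that the embedding codimensions of $R/I$ and $R/J$ coincide; denote the common value by $h$. On the other hand, Proposition \ref{Cor-EqualStar}(i) implies $\text{HF}_{R/I}(n)\ge \text{HF}_{R/J}(n)$ for all $n$, and by comparing the leading terms of the corresponding Hilbert polynomials (both of degree $d-1$) we deduce $e\ge e'$ for the multiplicities. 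Combining, $e'\le e\le h+2$, so the Cohen-Macaulay ring $R/J$ has at most almost minimal multiplicity.

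At this point the classical depth results in this setting apply: Sally's theorem for the minimal multiplicity case $e'=h+1$ (which even shows that $\grm(R/J)$ is Cohen-Macaulay), and the solution of Sally's conjecture by Rossi--Valla, Wang, and Huckaba--Marley for the almost minimal case $e'=h+2$. In either case $\text{depth}\,\grm(R/J)\ge d-1$, and Theorem \ref{P-HighDepthPertutbation} then guarantees that $R/I$ and $R/J$ have the same Hilbert function. Corollary \ref{T-SameBettiNumbers} now yields $\beta_i^R(R/I)=\beta_i^R(R/J)$ for all $0\le i\le p$.

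The main subtlety is in securing the equality of embedding codimensions, rather than just the inequality $h'\le h$ that would follow from Proposition \ref{Cor-EqualStar}(i) alone: without this upgrade, the bound $e'\le h+2$ does not translate into $e'\le h'+2$, and Sally's conjecture would no longer be applicable to $R/J$. The elementary observation that $I+\m^2=J+\m^2$ whenever $N\ge 2$ is what makes the whole argument go through.
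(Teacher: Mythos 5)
Your proposal is correct and follows essentially the same route as the paper: establish $e'\le e\le h+2$ with the embedding codimension preserved (the paper notes $\mu(\m/J)=\mu(\m/I)$, equivalent to your $I+\m^2=J+\m^2$ observation), invoke Abhyankar's inequality to reduce to the cases $e'=h+1$ and $e'=h+2$, apply Sally's theorem and the resolution of Sally's conjecture to get $\operatorname{depth}\grm(R/J)\ge d-1$, and conclude via Theorem \ref{P-HighDepthPertutbation} and Corollary \ref{T-SameBettiNumbers}.
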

\begin{proof}
Let $d=\dim(R/I)$ and $e'$ be the multiplicity of $R/J$. Then, as in the proof of Theorem \ref{P-HighDepthPertutbation}, given $N\gg 0$ we have that $e'\le e$. Notice also that the embedding codimension of $R/J$ is $$\mu(\m/J)-d=\mu(\m/I)-d=h.$$ 
Using Abhyankar's inequality (see \cite[Theorem 4.1]{rossi2011hilbert}) we have 
$$ h+1\le e'\le e\le h+2,$$
thus implying $e'=h+1$ or $e'=h+2$. By \cite{sally1977associated} and \cite{rossi1996conjecture}, we have that $\grm(R/J)$ is Cohen-Macaulay in the first case and $\text{depth }\grm(R/J)\ge d-1$ in the second case. The result now follows by Corollary \ref{Cor-EliasGen}.
\end{proof}

\begin{remark}
Let $d=\dim(R/I)$ and let $e$ and $s$ be the multiplicity and regularity index of $R/I$, respectively. From the proofs of Theorem \ref{P-HighDepthPertutbation} and Proposition \ref{P-MinMultPertutbation}, we see that we can take $\max\{\text{AR}(\m,I\subseteq R)+1, \max\{s,e-d\}+d\}=\max\{\text{AR}(\m,I\subseteq R)+1, s+d,e\}$ as a lower bound for $N$ in both statements.
\end{remark}

\vspace{0.1cm}
\textbf{Acknowledgements:} I want to thank my advisors Maria Evelina Rossi and Alessandro De Stefani for their guidance and their helpful suggestions during this project. I am also grateful to the department of Mathematics of the University of Genova for supporting my PhD program. 

\bibliographystyle{plain}
\bibliography{citations}
\end{document}